\newcommand*{\rom}[1]{\expandafter\@slowromancap\romannumeral #1@}
\newtheorem{thm}{Theorem}[section]
\newtheorem{lemma}[thm]{Lemma}
\newtheorem{example}[thm]{Example}
\newtheorem{rmk}[thm]{Remark}
\newtheorem{defi}[thm]{Definition}
\newtheorem{prop}[thm]{Proposition}
\newtheorem{pd}[thm]{Proposition-definition}
\def\A{\mathbb{A}}
\def\C{\mathbb{C}}
\def\D{\mathbb{D}}
\def\H{\mathbb{H}}
\def\N{\mathbb{N}}
\def\P{\mathbb{P}}
\def\Q{\mathbb{Q}}
\def\R{\mathbb{R}}
\def\Z{\mathbb{Z}}
\def\P{\mathbb{P}}
\def\cJ{\mathcal{J}}
\def\sE{\mathscr{E}}
\def\sH{\mathscr{H}}
\DeclareMathOperator{\Rat}{Rat}
\DeclareMathOperator{\rat}{rat}
\DeclareMathOperator{\res}{\mathrm{res}}
\DeclareMathOperator{\PGL}{\mathrm{PGL}}
\DeclareMathOperator{\Per}{\mathrm{Per}}
\newtheorem{introthm}{Theorem}[section]  
\title{Multipliers of a sequence of rational maps}
\author{Chen Gong}
\date{\today}
\thanks{C.G. is  supported by a CSC-202108070159 grant  from the chinese government.}
\begin{document}

\begin{abstract}
We analyze the behavior of multipliers of a degenerating sequence of complex rational maps. We show either most periodic points have uniformly bounded multipliers, or most of them have exploding multipliers at a common scale. We further explore the set of scales induced by the growth of  multipliers. Using Ahbyankar's theorem, we prove that there can be at most $2d-2$ such non-trivial multiplier scales.
\end{abstract} 

\maketitle 

\tableofcontents

\section{Introduction}
Let $f$ be a complex rational map of degree $d \geq 2$, i.e., an endomorphism of the Riemann sphere $\P^{1}(\C)$ written as
\begin{align*}
f \colon~ \P^{1}(\C) &\longrightarrow \P^{1}(\C),\\[3pt]
 [z_0 : z_1] &\longmapsto [P(z_0, z_1) : Q(z_0, z_1)],
\end{align*}
where $P(z_0,z_1)=\sum_{i=0}^{d}a_iz_0^iz_1^{d-i},Q(z)=\sum_{i=0}^{d}b_iz_0^iz_1^{d-i}$ are homogeneous polynomials of degree $d$ having no common factor. The space of all degree-$d$ rational maps is denoted by $\Rat_d(\C)$. Since the representation is unique up to scaling, each $f\in \Rat_d(\C)$ corresponds to a point in $\P^{2d+1}(\C)$ whose homogeneous coordinates are the coefficients of $P$ and $Q$.

Moreover, $\Rat_d(\C)$ coincides with the Zariski open subset of $\P^{2d+1}(\C)$ where the homogeneous resultant does not vanish:
\[
|\Res(f)| = \frac{\Res(P,Q)}{\max\{|a_i|^{2d},|b_j|^{2d}\}} \neq 0.
\]
The resultant function $|\Res|$ is continuous and bounded on $\Rat_d(\C).$

The group of Möbius transformations $\PGL_2(\C)$ acts on $\Rat_d(\C)$ by conjugacy. Since the dynamics of a rational map is invariant under conjugation, it is natural to consider the quotient space
$\rat_d(\C) = \Rat_d(\C) / \PGL_2(\C),$
called the moduli space of degree-$d$ rational maps. Silverman~\cite{SJ98} proved that $\rat_d(\C)$ is an affine variety of dimension $2d-2$. 

It is customary to denote by $[f]$ the conjugacy class of $f$. We say that a sequence $(f_n)_{n\in\N}$ \emph{degenerates in $\rat_d(\C)$} if $([f_n])_{n\in\N}$ escapes every compact subset of $\rat_d(\C)$.

Analogously to the homogeneous resultant, one can define the resultant for a conjugacy class:
\[
|\res([f])| \coloneqq \max_{M \in \PGL_2(\C)} |\Res(M^{-1}\circ f \circ M)|.
\]
The function takes its value in $(0,\max_{f\in\Rat_{d}(\C)}|\Res(f)|]$, as the orbit of $f$ under the action of $\PGL_2(\C)$ is closed, and the resultant $|\Res(f)|$ is a continuous and bounded function.

By definition, the sequence $(f_n)_{n\in\N}$ degenerates in the moduli space if and only if $|\res([f_n])|\to 0.$

\medskip

A point $p\in\P^1(\C)$ is periodic for $f$ if $f^l(p)=p$ for some $l\geq 1$, with the smallest such $l$ called its period. The \emph{multiplier} of $f$ at $p$ is $(f^l)'(p)$. Multipliers are invariant under conjugacy, and their collection provides a natural dynamical fingerprint of $f$. In fact, McMullen~\cite{McM87} proved that, except for the flexible Lattès family, the full collection of multipliers determines the conjugacy class up to finitely many possibilities. Ji–Xie later strengthened the theorem of McMullen in~\cite{JX23} that it suffices to consider the absolute values of multipliers. They further established in~\cite{JX23a} that, on a Zariski open dense subset, the full set of multipliers uniquely determines the conjugacy class. 

\medskip

The asymptotic behavior of multipliers of a degenerating sequence has been studied in several settings.

There are two types of results. The first asserts that $(f_n)_{n\in\N}$ admits an exploding multiplier. For quadratic rational maps, Milnor~\cite{JM93} proved that a sequence degenerates in the moduli space if and only if a multiplier of a fixed point tends to infinity. In the polynomial case, DeMarco–McMullen~\cite{DMM08} (see also~\cite{BH88}) showed that degeneration is again equivalent to the existence of an exploding multiplier. This statement was
recently strengthened by Huguin~\cite{Hug24}, who showed that it suffices to consider
periodic points of period at most two. An analogous statement for cubic rational maps was obtained by Favre in~\cite{Fav25}.

However, degeneration does not always force multipliers to explode. In the flexible Lattès family, all multipliers remain constant. The McMullen family $f_t(z) = z^p + t/z^q$ with $p,q\geq 2$ and $p+q\geq 4$~\cite{McM88,DLU05,QWY12} provides another example: the family degenerates, but all multipliers are uniformly bounded. Luo~\cite{YLuo22} proved the following remarkable statement: a hyperbolic component in $\rat_d(\C)$ contains a degenerating sequence with bounded multipliers if and only if the Julia set of some element in the component is nested.   

Recently, Favre–Rivera-Letelier~\cite{FRL25,Fav25} provides a dichotomy for holomorphic families with a pole at $t=0$: either almost all multipliers diverge at $t=0$, or else all multipliers remain uniformly bounded. 

\medskip

Our first theorem extends this dichotomy to arbitrary degenerating sequences.  

To describe the asymptotic behavior, we employ the ultrafilter technique (see §\ref{sec: ultrafilter}), which provides a consistent way to define limits of sequences in any compact space.  

A non-principal ultrafilter can be viewed as a finitely additive probability measure on $\N$ that is not concentrated on any single point.

Fix such a non-principal ultrafilter $\omega$.  
For each integer $l\ge1$, let $\mathrm{P}_l$ denote the set of all sequences $(p_n)_{n\in\N}$ where each $p_n$ is a periodic point of $f_n$ whose period divides $l$.  
We identify two sequences $(p_n)_{n\in\N}$ and $(p_n')_{n\in\N}$ whenever $p_n=p_n'$ $\omega$-almost surely.  
Since each $f_n$ has exactly $d^l+1$ periodic points of period $l$, it follows that $\mathrm{P}_l$ also has cardinality $d^l+1$.

For notational convenience, define
\[
\lambda(f_n,p_n) := \frac{1}{l} \log^+ \bigl| (f_n^l)'(p_n) \bigr|,
\]
where $\log^+ := \max\{\log, 0\}$.

\begin{introthm}
\label{thm: multiplierdicho}
Fix a non-principal ultrafilter $\omega$. Let $(f_n)_{n\in\N}$ be a sequence of rational maps of degree $d$ degenerating in moduli space. 

Denote by $\chi_{f_n}$ be the Lyapunov exponent of $f_n$, and set \[\chi=\lim_{\omega}-\log|\res([f_n])|\chi_{f_n}.\]
Then exactly one of the following holds:
\begin{enumerate}
    \item  If $\chi>0$, then there for any $\alpha\in(0,\frac{1}{2}\chi)$, we have
    \[
    \lim_{l\to\infty}\frac{1}{d^l}\#\Bigl\{ (p_n)_{n\in\N}\in \mathrm{P}_l : \lambda(f_n,p_n) > -\alpha\log|\res([f_n])|\ \text{ $\omega$-a.s.}\Bigr\} = 1;
    \]
    \item If $\chi=0$, then there exist $C>0$ and an integer $m<d$ such that, for all sufficiently large $l$,  
    \[
    \#\Bigl\{ (p_n)_{n\in\N}\in \mathrm{P}_l : \lambda(f_n,p_n) < C \ \text{ $\omega$-a.s.}\Bigr\} \geq d^l-m^l.
    \]
\end{enumerate}
\end{introthm}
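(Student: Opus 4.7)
The plan is to reformulate the problem in non-archimedean terms via the ultrafilter and then exploit equidistribution of periodic points on Berkovich space.

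First I would construct the non-archimedean field $\K_\omega := (\prod_n \C)/\omega$, with absolute value normalized by the scale $-\log|\res([f_n])|$. The sequence $(f_n)$ then descends to a rational map $f_\omega \in \Rat_d(\K_\omega)$ acting on the Berkovich projective line $\P^{1,\mathrm{an}}_{\K_\omega}$, and the degeneration hypothesis translates to $[f_\omega]$ lying outside the potentially good reduction locus. The quantity $\chi$ should be identified with the non-archimedean Lyapunov exponent $L(f_\omega) := \int \log|f_\omega'|\,d\mu_{f_\omega}$ of $f_\omega$ against its equilibrium measure; this is the non-archimedean analogue of the integral formula expressing $\chi_{f_n}$ via the Green current, whose singular part is governed by $-\log|\res([f_n])|$.

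Each equivalence class $(p_n) \in \mathrm{P}_l$ then determines a unique $\K_\omega$-rational periodic point $p_\omega$ of $f_\omega$ of period dividing $l$, giving a bijection between $\mathrm{P}_l$ and $\mathrm{Fix}(f_\omega^l) \cap \P^1(\K_\omega)$. Setting
\[
\hat\lambda(p_\omega) \;:=\; \lim_\omega \frac{\lambda(f_n,p_n)}{-\log|\res([f_n])|} \;=\; \frac{1}{l}\log^+\!\bigl|(f_\omega^l)'(p_\omega)\bigr|_\omega,
\]
the dichotomy of Theorem~\ref{thm: multiplierdicho} reduces to a statement about the distribution of $\hat\lambda$ on $\mathrm{Fix}(f_\omega^l)$.

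In the case $\chi > 0$, I would invoke the non-archimedean equidistribution of periodic points (Favre-Rivera-Letelier, Baker-Rumely) to obtain weak-$*$ convergence of $d^{-l}\sum_{p\in\mathrm{Fix}(f_\omega^l)}\delta_p$ to $\mu_{f_\omega}$, together with the convergence of Lyapunov averages $d^{-l}\sum_p \hat\lambda(p) \to \chi$. Since $\hat\lambda \ge 0$, is uniformly bounded above, and takes only finitely many asymptotic values on classical periodic points (controlled by the local degree structure at the finitely many reduction points of $f_\omega$), a first-moment argument should upgrade the convergence of averages to a concentration statement: for $\alpha < \chi/2$ the proportion of $p \in \mathrm{Fix}(f_\omega^l)$ with $\hat\lambda(p) > \alpha$ tends to $1$. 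In the case $\chi = 0$, vanishing of $L(f_\omega)$ forces the Berkovich Julia set of $f_\omega$ to be finite and supported on type-II points; outside a neighborhood of this set $f_\omega$ is equicontinuous, so $\hat\lambda(p_\omega) = 0$, equivalently $\lambda(f_n,p_n) \le C$ for a uniform constant $C$. Periodic points that accumulate on the Julia set would be counted by the local degrees at the finitely many type-II Julia points, each strictly less than $d$, yielding the bound $d^l - m^l$ for an appropriate integer $m < d$.

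The main obstacle I anticipate is the concentration step in the $\chi > 0$ case: weak-$*$ equidistribution controls only integrals of continuous test functions, whereas $\hat\lambda$ is piecewise-constant and discontinuous on Berkovich space. The heart of the argument is showing that $\hat\lambda$ takes only finitely many asymptotic values on classical periodic points, and that the values below $\chi/2$ contribute an asymptotically vanishing fraction; this is precisely what dictates the threshold $\chi/2$ appearing in the theorem.
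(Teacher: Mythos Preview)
Your overall framework matches the paper's: pass to the non-archimedean limit $f_\omega$ over the Robinson field, identify $\chi$ with its Lyapunov exponent, and use the bijection $\mathrm{P}_l \leftrightarrow \Per_l(f_\omega)$ together with $\hat\lambda(p_\omega)=\lim_\omega \epsilon(f_n)\lambda(f_n,p_n)$. In the case $\chi>0$ the paper simply quotes the concentration inequality
\[
\#\{p\in\Per_l(f_\omega):|(f_\omega^l)'(p)|>Ce^{\chi l/2}\}\ge(1-\varepsilon)d^l
\]
as a black box from \cite{FRL25,Fav25}, whereas you propose to re-derive it from equidistribution plus a first-moment argument; your sketch of why the threshold is $\chi/2$ rather than $\chi$ is not convincing, but since you flag this as the anticipated obstacle I will not dwell on it.

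The genuine gap is in the case $\chi=0$. Two points:

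First, vanishing Lyapunov exponent does \emph{not} force the Julia set of $f_\omega$ to be finite. What the paper uses (Theorem~\ref{thm: dichotomylyapunov}) is that $\chi_{f_\omega}=0$ together with no potential good reduction forces $f_\omega$ to be \emph{Bernoulli}: the convex hull of $\cJ(f_\omega)$ is a segment $I=[a,b]$, and $\cJ(f_\omega)$ is typically a Cantor set inside $I$. The exponent $m$ in the bound $d^l-m^l$ arises not from local degrees at finitely many Julia points, but from the local degrees $d_1,d_k$ of $f_\omega$ at the two \emph{endpoints} $a,b$ of $I$; Type-1 periodic points projecting to the open interval $(a,b)$ are the ones with controlled multiplier, and a count shows there are at least $d^l-d_1^l-d_k^l+1$ of them.

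Second, and more seriously, your implication ``$\hat\lambda(p_\omega)=0$, equivalently $\lambda(f_n,p_n)\le C$'' is false. The equality $\hat\lambda(p_\omega)=0$ says only that $\lambda(f_n,p_n)=o(1/\epsilon(f_n))$, which allows arbitrary sub-scale growth; Example~\ref{eg: 1} in the paper exhibits exactly this, with $\lambda(f_n,p_n)\asymp\log n$ while $1/\epsilon(f_n)\asymp n$. To obtain an honest uniform bound $C$ one must go one level finer: for a Type-1 periodic point $p$ projecting to an interior point $x\in(a,b)$, the paper shows (Theorem~\ref{thm: positionfixedpoints}) that the \emph{reduction} of the multiplier satisfies $\widetilde{((f_\omega)^l)'(p)}=\delta\cdot\tilde\zeta$ with $\delta=\deg_x(f_\omega^l)$ an integer and $\zeta$ a root of unity. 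Lemma~\ref{lem: redlimit} then converts equality of reductions into equality of \emph{complex} $\omega$-limits, yielding $\lim_\omega|(f_n^l)'(p_n)|=\delta$ and hence $\lim_\omega\lambda(f_n,p_n)=\tfrac1l\log\delta\le\log\max_i d_i$. This two-scale argument---first the non-archimedean norm, then the residue field---is the step your proposal is missing.
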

\begin{rmk}
If $(f_n)_{n\in\N}$ converges in the moduli space, then both $(1)$ and $(2)$ hold.
\end{rmk}

\begin{rmk}
In the first case, $\chi$ is the Lyapunov exponent of the non-Archimedean limit map of the sequence~\((f_n)_{n\in\mathbb{N}}\) 
(see \S\ref{sec: constructionNA}).
When \(\alpha > \tfrac{1}{2}\chi\), however, the statement may fail to hold; 
see Example~\ref{eg: fundamental}.

In the second case, one can choose $C=\log d$. In fact, all possible choices of $C$ are completely described in Remark~\ref{rmk: optimalconstant}.
\end{rmk}
\begin{rmk}
The uniformity for meromorphic family  is stronger than the general sequence case, since the equalities of multipliers hold for all parameters \( t \) near $0$ and all periods \( l \in \mathbb{N} \). In contrast, for a general sequence, the \(\omega\)-big set on which these equalities hold may depend on the period \(l \).
\end{rmk}

\begin{example}
Unlike the case of a meromorphic family, case~(2) does not imply that all multipliers remain bounded; indeed, exponentially many periodic points may still have exploding multipliers.  
For example,
\[
f_n(z) = \frac{z^2+n}{1+e^{-n}z^4}.
\] 
See Example~\ref{eg: 1} for details. 
\end{example}

The proof of Favre-Rivera-Letelier in the case of meromorphic family relies on the fact that, for any fixed $l\geq 1$, the function
\[
t \mapsto \max_{f_t^{l}(p_t)=p_t}\log|(f_t^l)'(p_t)|
\]
is subharmonic on $\D^*$ and satisfies \[\max_{f_t^{l}(p_t)=p_t}\log|(f_t^l)'(p_t)| = O(\log|t|^{-1}).\] Hence $\max_{f_t^{l}(p_t)=p_t}\log|(f_t^l)'(p_t)|=o(\log|t|^{-1})$ implies uniform boundedness of multipliers; otherwise, most multipliers explode. This approach does not work in the case of sequences.

We prove the dichotomy for a sequence by using non-Archimedean methods.

Following Favre–Gong~\cite{FG25}, to any sequence of rational maps $(f_n)_{n\in\N}$ one associates a rational map $f_\omega$ defined over a non-Archimedean complete valued field $\sH(\omega)$.  
Favre–Rivera-Letelier~\cite{FRL25} showed that $f_\omega$ either has positive Lyapunov exponent, corresponding to case $(1)$, or has Lyapunov exponent zero.  
In the latter case, most periodic cycles collapse onto a cycle of a monomial map, so that the corresponding multipliers are asymptotically governed, leading to case (2).

\medskip

In contrast to the meromorphic case, one observes another phenomenon: Different periodic points may diverge with significantly different growth behaviors. For instance:

\begin{example}
Consider $f_n(z)=e^{-n}z^3+z^2+nz$, with fixed points
\[
p_{1,n}=0, \quad p_{2,n}=\frac{-1-\sqrt{1-4e^{-n}(n-1)}}{2e^{-n}}.
\]
We have $f_n'(p_{1,n})=n$ while $f_n'(p_{2,n})\asymp e^n$. Thus, the two multiplier sequences explode with distinct growth.
\end{example}

To study the growth of all multipliers, we introduce the notion of \emph{scales}, which provides a refined measurement of their asymptotic behavior.  
A \emph{scale} is represented by a sequence $(\epsilon_n)_{n\in\N}\in (0,1]^\N$, where two sequences $(\epsilon_n)_{n\in\N}$, $(\epsilon'_n)_{n\in\N}$ are identified if and only if $\lim_{\omega}\frac{\epsilon_n}{\epsilon'_n}\in(0,\infty)$.
The scale represented by the constant sequence $(1)$ is called the trivial scale.

For a sequence of periodic points $(p_n)_{n\in\N}\in \mathrm{P}_l$, its \emph{multiplier scale} is defined by
\[
\eta_{p,n} = \frac{1}{1+\lambda(f_n,p_n)}.
\]
The following theorem asserts that, as the period $l$ varies, only finitely many distinct multiplier scales can occur.
\begin{introthm}
\label{thm: intro2d-2}
For any degenerating sequence $(f_n)_{n\in\N}\in\Rat_{d}(\C)^{\N}$, there are at most $2d-2$ non-trivial multiplier scales. In particular, the total number of distinct multiplier scales is at most $2d-1$.
\end{introthm}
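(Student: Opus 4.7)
The plan is to translate the problem to the non-Archimedean setting and apply Abhyankar's inequality to bound the rank of a valuation induced by the sequence. I begin by invoking the ultrafilter construction from \S\ref{sec: constructionNA} to realize the degenerating sequence $(f_n)_{n\in\N}$ as a single rational map $f_\omega\in\Rat_d(\sH(\omega))$ over a non-Archimedean complete valued field. After conjugating each $f_n$ by a Möbius transformation achieving the maximum defining $|\res([f_n])|$, the normalized coefficients of $f_\omega$ generate a subfield $K\subset\sH(\omega)$, and $[f_\omega]$ descends to a $K$-point of the moduli space $\rat_d$. Since $\rat_d$ is an affine variety of dimension $2d-2$, this forces $\mathrm{tr.deg}(K/\C)\leq 2d-2$.

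Next, I plan to extract from the sequence a higher-rank valuation $w$ on $K$ whose rank coincides with the number of distinct non-trivial multiplier scales. The non-trivial scales are totally ordered by $\omega$-comparison: $s_1\prec s_2$ if representing sequences $(\epsilon_n^{(1)}),(\epsilon_n^{(2)})$ satisfy $\epsilon_n^{(1)}/\epsilon_n^{(2)}\to 0$ along $\omega$. Given an enumeration $s_1\prec s_2\prec\ldots\prec s_r$ of the non-trivial scales realized by multipliers of $f_\omega$, I construct $w$ iteratively: the coarsest rank-$1$ layer is $v|_K$, picking out the scale $s_r$; successive refinements detect the next finer scales on the associated graded pieces, producing a composite rank-$r$ valuation on $K$. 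Each convex subgroup of $\Gamma_w$ will then correspond bijectively to one non-trivial multiplier scale, realized as the valuation of some algebraic element $(f_\omega^l)'(p_\omega)\in K^{\mathrm{alg}}$.

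Abhyankar's inequality applied to the finitely generated extension $K/\C$ (with $\C$ trivially valued) then gives
\[
r \;=\; \mathrm{rank}(w) \;\leq\; \mathrm{rat.rank}(\Gamma_w) \;\leq\; \mathrm{tr.deg}(K/\C) \;\leq\; 2d-2,
\]
which is the desired bound. Including the trivial scale yields at most $2d-1$ distinct multiplier scales in total.

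The main obstacle is the construction of the rank-$r$ valuation $w$: I must show that each distinct non-trivial scale truly introduces a fresh convex subgroup in the valuation hierarchy on $K$, rather than being absorbed into a coarser stratum or producing only a new value inside an existing rank-$1$ layer. Equivalently, the realizing multipliers must lie in successive graded pieces of $\Gamma_w$ and not merely at distinct real values of a single rank-$1$ quotient. I expect this step to rely on analyzing the Berkovich dynamics of $f_\omega$ on $\P^{1,\mathrm{an}}_{\sH(\omega)}$: classical periodic points reduce to distinct orbits of tree points, and each such orbit type should contribute exactly one layer of the valuation hierarchy, which is the geometric content matching the algebraic bound supplied by Abhyankar.
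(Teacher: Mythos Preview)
Your overall strategy---pass to the ultralimit $f_\omega$, bound the transcendence degree of a field of definition by $2d-2$, and invoke Abhyankar---matches the paper. But two steps need correction.

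First, the transcendence degree bound is not established as written. Conjugating to maximize $|\Res(f_n)|$ does not force the coefficient field $K$ to satisfy $\mathrm{tr.deg}(K/\C)\le 2d-2$; a $K$-point of $\rat_d$ places no bound on $K$ itself. The paper (Lemma~\ref{lem: transcendentaldegree}) instead conjugates $f_\omega$ so that three of its fixed points lie at $0,1,\infty$, which eliminates three of the $2d+1$ projective coefficients directly. Your moduli-space idea can be salvaged by observing that multipliers, being conjugacy-invariant, lie in the algebraic closure of the residue field of the image point in $\rat_d$, and working with that smaller field; but the implication as stated is false.

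Second, and more importantly, the step you flag as the ``main obstacle'' requires no Berkovich dynamics and is in fact immediate. The paper does not build a rank-$r$ valuation by iterated refinement on residue fields. Instead, to every scale $\eta\ge\epsilon(f)$ it attaches two valuation rings of $\sH^{\epsilon(f)}(\omega)$,
\[
R_-^\eta=\Bigl\{(z_n):\lim_\omega|z_n|^{\eta_n}\le1\Bigr\}\ \subsetneq\ R_+^\eta=\Bigl\{(z_n):\lim_\omega|z_n|^{\eta_n}<\infty\Bigr\},
\]
and checks these are nested monotonically in $\eta$ (Proposition~\ref{prop: limitring}). The witness that a non-trivial multiplier scale $\eta_p$ produces a genuine jump $R_-^{\eta_p}\cap K\subsetneq R_+^{\eta_p}\cap K$ is simply the multiplier itself: by the definition $\eta_{p,n}=(1+\lambda(f_n,p_n))^{-1}$, one has $\lim_\omega|(f_n^l)'(p_n)|^{\eta_{p,n}}\in(1,\infty)$, so this element lies in $R_+^{\eta_p}\setminus R_-^{\eta_p}$. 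Since the multiplier is algebraic over the coefficient field, intersecting the chain with $K$ (taken algebraically closed in $\sH^{\epsilon(f)}(\omega)$) yields at least $r$ distinct valuation subrings of $K$, and Abhyankar (Theorem~\ref{thm: numbervaluationring}) gives $r\le\mathrm{tr.deg}(K/\C)\le 2d-2$. There is no dynamical input at this stage; your expectation that tree-orbit types of periodic points encode the convex subgroups is a red herring.
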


\begin{example}
When $d=2$, the bound is optimal. For instance,
\[
f_n(z) = \frac{z(1-z)}{e^{-n}z^2 - n z + n}.
\]
admits three distinct multiplier scales: $(\frac{1}{n})_{n\in\N}$, $(\frac{1}{\log n})_{n\in\N}$, and the trivial scale $(1)_{n\in\N}$. See Example~\ref{eg: 2} for details.
\end{example}

The proof of this theorem  relies on valuation theory. 
Since the moduli space has dimension $2d-2$, we may assume that the non-Archimedean limit $f_\omega$
is defined over a subfield $K\subseteq \sH(\omega)$ of transcendence degree at most $2d - 2$ over $\mathbb{C}$.
The key observation is that each non-trivial multiplier scale defines a valuation ring contained in $K$. 
The expected bound then follows from Abhyankar's inequality, which asserts that any ordered chain of valuation
subrings of $K$ has length at most $\mathrm{deg.tr}(K/\C)$.
 
\subsection*{Acknowledgements}
The author warmly thanks Charles Favre for proposing the problem, for numerous fruitful discussions, and for his careful reading of this manuscript. She is also thankful to Yusheng Luo for valuable exchanges concerning the finiteness of multiplier scales, and to Zhuchao Ji, Junyi Xie, Jit Wu Yap and Yugang Zhang for their insightful remarks. This work forms a part of the author’s Ph.D. thesis.

\section{Dynamics of rational maps over a complete valued field}
\label{sec: preliminary}
\subsection{Berkovich projective line }
Let $k$ be an algebraically closed field endowed with a complete multiplicative norm. We suppose the norm is non-trivial, i.e., there exists $a\in k$ such that $|a|\not=1.$

We introduce the construction of \emph{Berkovich projective line} over $k$, see~\cite{berkovich2012spectral} for more details.

We first define the Berkovich affine line $\A_{k}^{1,an}$ over $k$. It is defined as the set of all multiplicative semi-norms on \( k[T] \) whose restriction to \( k \) agrees with the given norm on $k$. We equipped $\A_{k}^{1,an}$ with the coarsest topology that makes all  evaluation maps of the form \( |\cdot| \mapsto |P| \), for \( P \in k[T] \), continuous. 

Similar to the construction of canonical projective line, the Berkovich projective line $\P_{k}^{1,an}$  is defined by patching together two affine charts $X_i=\A_{k}^{1,an}$ along $\A_{k}^{1,an}\setminus\{0\}$. The construction is as follows: we identify $x_{0} \in X_{0}\setminus\{0\}$  and $x_{1}\in X_{1}\setminus\{0\}$) if and only if $|P(1/T)|_{x_0}=|P(T)|_{x_1}$
for all polynomial $P\in k[T]$.
This construction naturally gives homogeneous coordinates $[z_{0}\colon z_{1}]$ on $\P_{k}^{1,an}$. 
For notationally convience, we usually write 
$z=z_0$, $0 = [0\colon1]$ and $\infty= [1\colon0]$
so that $\P_{k}^{1,an}= \A_{k}^{1,an} \sqcup \{\infty\}$. It follows from \cite[Theorem 1.2.2]{berkovich2012spectral} that the space $\P_{k}^{1,an}$ is compact.

If the field $k$ is Archimedean, by Gelfand-Mazur's theorem and Ostrowski's theorem, there exists $0<\epsilon\leq 1$ such that $k$ is isometric to $\C_\epsilon=(\C,|\cdot|^{\epsilon}),$ where $|\cdot|$ denotes the standard Euclidean norm on $\C$. In particular, if $\epsilon=1$, then $\P_{k}^{1,an}$ is the Riemann sphere $\hat{\C}$.

If the field $k$ is non-Archimedean, that is, if $k$ satisfies the strong triangle inequality $|x+y|\leq \max\{|x|,|y|\} $ for any $x,y\in k$,
then the Berkovich projective line $\P^{1,an}_k$ carries the structure of a compact $\R$-tree: for any \( x, y \in \mathbb{P}_k^{1,an} \), there exists a unique segment \( [x, y] \) connecting them.  Moreover, every point $x \in \P^{1,an}_k$ belongs to one of the following four types:

   \begin{itemize}
       \item[Type-$1$]: $x=\infty$ or there exists $a\in k$ such that for any $P\in k[z]$, $|P|_{x}=|P(a)|$;
       \item[Type-$2$]: there exist $a\in k$ and $r\in |k^*|$ such that 
       \[|P|_{x}=\max_{z\in \bar{B}(a,r)}|P(z)|=\max\{|a_{i}|r^{i}\},\] where $P(z)=a_{d}(z-a)^{d}+...+a_1 (z-a)+a_{0}$, and $\bar{B}(a,r)$ is the closed ball of radius $r$ centered at $a$ in $k$;
      \item[Type-$3$]: there exist $a\in k$ and $r\in \R_{+}\setminus|k^*|$ such that for all $P\in k[z]$, $|P|_{x}=\sup_{z\in \bar{B}(a,r)}|P(z)|=\max\{|a_{i}|r^{i}\}$ as in the previous case;
       \item[Type-$4$]: there exists a sequence of decreasing closed balls $\bar{B}_{n}$  with $\bigcap \bar{B}_{n}=\emptyset$, such that $|P|_{x}=\inf_{n}\max_{z\in \Bar{B}_{n}}|P(z)|$ for all $P\in k[z]$.
   \end{itemize}

Usually, we write $\zeta(a,r)$ for the Type-2 or-3 point associated with the closed ball $\bar{B}(a,r)$, and denote the Gauss point $\zeta(0,1)$ by $x_g$.

The hyperbolic space $\H_{k}$ is defined as the complement in $\P_k^{1,an}$ of Type-1 points.
For any pair of Type-2 or-3 points $x,y\in\H_{k}$ defined by the closed balls $\bar{B}(a_1,r_1)$ and $\bar{B}(a_2,r_2)$ respectively, we 
let \begin{align*}
   d_{k}(x,y)=  2\log\max\{r_{1},r_{2},|a_{1}-a_{2}|\}-\log r_{1}- \log r_{2}.
\end{align*}

    The function $d_{k}(\cdot,\cdot)$ extends continuously to $\H_{k}\times \H_k$ and defines a complete distance
    on $\H_k$. The group of Möbius transformations $\PGL_2(k)$ acts by isometries on $(\H_k,d_{k})$ and it acts transitively on the set of Type-2 points. 
We refer the readers to~\cite[Proposition 2.29]{BR10} for a proof.

\subsection{Julia sets, Lyapunov exponents and periodic cycles}
\label{sec: Julia}

We refer the reader to~\cite{Mi06} for a detailed discussion of the dynamics of complex rational maps. In this section,  we focus on the case where $k$ is a non-trivial, algebraically closed, non-Archimedean field. 

We denote by 
$k^\circ := \{ z \in k : |z| \leq 1 \}$
the valuation ring of $k$, by 
$k^{\circ\circ} := \{ z \in k : |z| < 1 \}$
its maximal ideal, and by $\tilde{k} := k^\circ / k^{\circ\circ}$ its residue field. 

Recall a rational map of degree $d\geq 2$ with coefficients in $k$ is given in homogeneous coordinates by
\[
f : [z_{0}:z_{1}] \longmapsto [P(z_{0},z_{1}) : Q(z_{0},z_{1})],
\]
where $P,Q \in k[z_{0},z_{1}]$ are homogeneous polynomials of degree $d$ without common factors.

It induces a continuous map on the Berkovich projective line $\P^{1,\mathrm{an}}_k$. Concretely, $f=[P:Q]$ sends a rigid point $[z_0:z_1]\in \P^1(k)$ to $[P(z_0,z_1):Q(z_0,z_1)]$, and for a point $x\in \H_k$ with trivial kernel one has
\[
|h(f(x))| = |(h\circ f)(x)|
\quad\text{for all } h\in k(z).
\]

The endomorphism $f\colon \P_{k}^{1,an}\rightarrow \P_{k}^{1,an}$ is continuous, finite, open and surjective, see, e.g.,~\cite[Proposition 4.3]{Jonsson}.
In particular, the pullback of the Dirac measure $\delta_{x}$ supported at any $x\in\P_k^{1,an}$:
\[
f^{*}\delta_x=\sum_{f(y)=x}\deg_{y}f\delta_y\]
 is a positive measure of mass $d$.

Recall from~\cite{Ben19,BR10,FR10} that for any point $x\in\H_k$, the sequence of probability measures $\frac{1}{d^n}(f^n)^*\delta_x$ converges to an ergodic measure $\mu_f$, called the equilibrium measure. This measure does not depend on the choice of $x.$ 

The equilirum measure  $\mu_f$ admits a continuous potential. Hence, for any rational function $h \in k(z)$ we have $\log |h| \in L^1(\mu_f)$, see~\cite[\S 2.4]{FRL06}. 

We set
\begin{equation*}\label{eq:df}
|df|(z) := \frac{|f'(z)|\max\{1,|z|^2\}}{\max\{1,|f(z)|^2\}}
= \frac{|P'Q-Q'P|\max\{1,|z|^2\}}{\max\{|P|^2,|Q|^2\}}.
\end{equation*} The \emph{Lyapunov exponent} of $f$ is defined by
\[
\chi_f := \int \log |df| \, d\mu_f.
\]
This is a finite real number, and it is non-negative by~\cite{Ok15,FRL25}.  

\medskip

The support of $\mu_f$ is the \emph{Julia set} of $f$, denoted by $\cJ(f)$. The Julia set is the minimal closed subset of $\P^{1,\mathrm{an}}_k$ that is backward invariant under $f$. 

\smallskip

Let  $p$ be a periodic point of $f$ with period $l$. If $p$ is of Type-1, it is called \emph{repelling} when its multiplier has modulus $>1$; if $p \in \mathbb{H}_k$, it is called \emph{repelling} when the local degree of $f^l$ at $p$ is at least $2$. By \cite[Theorem 10.88]{BR10}, the Julia set is the closure of the set of repelling periodic points. Moreover, \cite[Theorem 8.7]{Ben19} implies that every repelling periodic point is either of Type-1 or Type-2.

\smallskip

We also recall the following  theorems: the Lyapunov exponent encodes a wealth of the Julia set and the multipliers.

\smallskip

A rational map over $k$ is said to \emph{have potential good reduction} if its Julia set is a single Type-2 point on the Berkovich projective line. It is said to be \emph{Bernoulli} if the convex hull of its Julia set is a non-degenerate geodesic segment in $\H_k$.

We will discuss Bernoulli maps in more details in \S\ref{sec: Bernoullimap}.

\begin{thm}[\cite{Fav25,FRL25}]
\label{thm: dichotomylyapunov}
Suppose the residue field $\tilde{k}$ has characteristic $0$, then the following statements hold: 
\begin{enumerate}
    \item If $\chi_f>0$, then for every $\varepsilon>0$ there exists $C>1$ such that, for all $l\gg 1$, one has
\[
\#\left\{p \in \Per_{l}(f) \,\middle|\, |(f^l)'(p)| > Ce^{\frac{\chi_f}{2}l} \right\} \geq (1 - \varepsilon) d^l;
\] 
\item If $\chi_f=0$, then either $f$ has potential good reduction, or $f$ is Bernoulli.
\end{enumerate}
\end{thm}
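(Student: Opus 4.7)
The plan is to transfer the dichotomy of Theorem~\ref{thm: dichotomylyapunov} from the non-Archimedean limit $f_\omega$ of $(f_n)_{n\in\N}$ back to the Archimedean periodic multipliers. Following \cite{FG25}, one builds the complete valued field $\sH(\omega)$ as the quotient of $\prod_{n\in\N}(\C,|\cdot|^{s_n})$ with respect to $\omega$, where $s_n = -1/\log|\res([f_n])|\to 0$. After conjugating each $f_n$ by a Möbius transformation $M_n$ realizing the maximum in the definition of $|\res([f_n])|$, the coefficient ultralimit defines a degree-$d$ rational map $f_\omega\in\Rat_d(\sH(\omega))$ with Lyapunov exponent $\chi_{f_\omega}=\chi$. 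Since each $f_n$ and $f_\omega$ admit exactly $d^l+1$ periodic points of period dividing $l$, the set $\mathrm{P}_l$ is naturally identified with $\Per_l(f_\omega)$ via $\omega$-convergence of roots of the period equations, and the key dictionary reads
\begin{equation*}
\frac{1}{l}\log|(f_\omega^l)'(p_\omega)|_\omega \;=\; \lim_\omega \frac{\lambda(f_n,p_n)}{-\log|\res([f_n])|}.
\end{equation*}

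If $\chi>0$, I would apply Theorem~\ref{thm: dichotomylyapunov}(1) to $f_\omega$: for any $\varepsilon>0$ there exists $C'>1$ such that, for all $l\gg 1$, at least $(1-\varepsilon)d^l$ periodic points $p_\omega\in\Per_l(f_\omega)$ satisfy $|(f_\omega^l)'(p_\omega)|_\omega>C'e^{l\chi/2}$. Fix $\alpha<\chi/2$; then $C'e^{l\chi/2}>e^{l\alpha}$ for all $l$, and by the dictionary, $\omega$-a.s.\ the corresponding sequences obey $\lambda(f_n,p_n)>-\alpha\log|\res([f_n])|$. A standard diagonal argument letting $\varepsilon\to 0$ as $l\to\infty$ then yields statement (1).

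If $\chi=0$, Theorem~\ref{thm: dichotomylyapunov}(2) ensures $f_\omega$ either has potential good reduction or is Bernoulli. In either case the structure of $\Per_l(f_\omega)$ is transparent. Under good reduction, after conjugation the Gauss point is fixed and $f_\omega$ reduces to a degree-$d$ complex rational map $\bar f$ on $\P^1(\C)$ (the residue field of $\sH(\omega)$ being $\C$); Type-1 periodic points of $f_\omega$ reduce bijectively to periodic points of $\bar f$ outside an exceptional set of at most $m^l$ orbits meeting the critical locus of $\bar f$. In the Bernoulli case, after conjugation $f_\omega$ acts on its Julia set as a subshift of finite type on at most $d$ symbols, and orbits whose itineraries avoid the boundary of the Markov partition number at least $d^l-m^l$ for some $m<d$. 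For these $d^l-m^l$ generic orbits one has $|(f_\omega^l)'(p_\omega)|_\omega \le 1$. To upgrade this to the Archimedean bound $\lambda(f_n,p_n)<\log d$, I would use the compactness of the normalized conjugates $M_n^{-1}\circ f_n\circ M_n$ in $\P^{2d+1}(\C)$ together with the explicit limiting object (either $\bar f$ itself or the piecewise-monomial Markov model) to quantitatively bound the multipliers at these orbits by $d^l$ in the complex sense for $\omega$-large $n$.

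The principal obstacle is this last upgrade. The valuation dictionary alone yields only $\lambda(f_n,p_n)=o(-\log|\res([f_n])|)$ for the generic orbits, which is significantly weaker than the uniform Archimedean bound $\lambda(f_n,p_n)<C$ demanded by statement (2). Bridging this gap requires exploiting the concrete Archimedean-side model produced by Theorem~\ref{thm: dichotomylyapunov}(2): one must show that the reduction $\bar f$ (respectively the Bernoulli Markov model) governs not merely the first-order valuation of multipliers but their genuine Archimedean moduli, pinning down the sharp constant $C=\log d$. The combinatorial bound $m<d$ for the exceptional set must likewise be extracted from a careful orbit count in the reduced dynamical system, and handling the two structural possibilities uniformly (good reduction versus Bernoulli) is the technical heart of the argument.
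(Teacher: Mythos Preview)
You are not proving the stated theorem. Theorem~\ref{thm: dichotomylyapunov} is a structural result about a \emph{single} rational map $f$ over a non-Archimedean field $k$ with residue characteristic~$0$; the paper imports it from \cite{Fav25,FRL25} and gives no proof of its own. Your writeup is instead a sketch of the proof of Theorem~\ref{thm: multiplierdicho}, which \emph{uses} Theorem~\ref{thm: dichotomylyapunov} as a black box. So there is no comparison to make: the target is a cited input, and your proposal addresses a downstream statement.

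Even read as a sketch of Theorem~\ref{thm: multiplierdicho}, two points deserve correction. First, the good-reduction case never arises: after normalizing so that $|\Res(f_n)|=|\res([f_n])|$, Theorem~\ref{thm: limitbadreduction} guarantees $f_\omega^{\epsilon(f)}$ does not have potential good reduction, so in case~(2) one is always Bernoulli. Second, the ``upgrade'' you identify as the principal obstacle is not handled by compactness of the coefficients in $\P^{2d+1}(\C)$; that argument would only recover $o(-\log|\res([f_n])|)$. The paper instead exploits the explicit residual structure of Bernoulli maps (Theorem~\ref{thm: positionfixedpoints}): for a periodic point $p$ projecting to the open interval $(a,b)$, the \emph{reduction} of the multiplier equals $\delta\tilde\zeta$ for a root of unity $\zeta$ and local degree $\delta$. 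Via Lemma~\ref{lem: redlimit} this forces the genuine complex limit $\lim_\omega (f_n^l)'(p_n)=\delta\zeta$, giving the sharp bound $\lambda(f_n,p_n)\to\frac1l\log\delta\le\log\max_j d_j$ directly, with $m$ any integer in $(\max\{d_1,d_k\},d)$.
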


\subsection{Tangent map and reduction map}
\label{sec: directiontagentmap}
Let $k$ be a non-trivial algebraically closed non-Archimedean field.  

In this section, we briefly recall the definition of the tangent map of a rational map and explain its relation to the reduction map. For further details, we refer to~\cite{BR10,Ben19}.

\smallskip

Recall that the Berkovich projective line \( \mathbb{P}_k^{1,an} \) has the structure of an \( \mathbb{R} \)-tree: for any \( x, y \in \mathbb{P}_k^{1,an} \), there exists a unique segment \( [x, y] \) connecting them. A \emph{direction} \( \vec{v} \) at a point \( x \in \mathbb{P}_k^{1,an} \) is an equivalence class of points in \( \mathbb{P}_k^{1,an}\setminus\{x\} \), where two points \( y, y' \) are equivalent if the segments \( (x,y] \) and \( (x,y'] \) intersect non-trivially. The set of directions at \( x \) is called the \emph{tangent space} at \( x \), denoted by \( T_x \mathbb{P}_k^{1,an} \). For a given direction \( \vec{v} \), the associated subset of points representing it is denoted by \( U(\vec{v}) \). This is a connected open subset of $\P_k^{1,an}$ whose boundary is exactly the point $x$.

If $x$ is a Type-2 point of $\P^{1,an}_k$, there is a canonical identification
\[
\varphi \colon~ T_x \P^{1,an}_k \xrightarrow{\ \sim\ } \P^1(\tilde{k}).
\]
In particular, if $x=x_g$, then the direction corresponding to the open disc $\{\,|z-a|<1\,\}$ with $|a|\leq 1$ maps to $\tilde a\in\tilde k$, while the direction corresponding to $\{\,|z|>1\,\}$ maps to $\infty$.  

\smallskip

A rational map $f$ of degree at least $1$ induces a tangent map  
\[
T_x f \colon T_{x}\P^{1,an}_k \longrightarrow T_{f(x)}\P^{1,an}_k .
\]
A direction $\vec{v}$ is called \emph{good} if $U(\vec{v})$ does not contain any preimage of $f(x)$. By \cite[Proposition~9.41]{BR10}, if $\vec{v}$ is good, then 
$f(U(\vec{v}))= U(T_x f(\vec{v})),$
while if $\vec{v}$ is bad, then 
$f(U(\vec{v})) = \P^{1,an}_k.$

Choose a normalized representation $f=[P:Q]$ such that the maximum absolute value of the coefficients equals $1$. By reducing the coefficients modulo the valuation ring, one obtains a rational map  
$\tilde{f}=[\tilde{P}:\tilde{Q}]$
over $\tilde{k}$.  This map has degree at most $d.$

If $x_g$ is fixed by $f$, then under the identification $\varphi$, the tangent map coincides with the reduction map $\tilde{f}$. This is summarized in the following commutative diagram (see \cite[Lemma~7.35]{Ben19}):  

\begin{center}
\begin{tikzcd}
T_{x_g}\P^{1,an}_k \arrow[d,"T_{x_g}f"] \arrow[r,"\varphi"] 
  & \P^1(\tilde{k}) \arrow[d,"\tilde{f}"] \\
T_{x_g}\P^{1,an}_k \arrow[r,"\varphi"] 
  & \P^1(\tilde{k})
\end{tikzcd}
\end{center}

Moreover, the following proposition collects some useful properties of the tangent map.
\begin{prop}
\label{prop: basicreduction}
Let $f$ be a rational map over $k$. Then:
\begin{enumerate}
    \item $f$ fixes $x_g$ if and only if $\deg(\tilde{f}) \geq 1$.
    \item If $f$ fixes $x_g$, then $\deg_{x_g}(f) = \deg(\tilde{f})$.
    \item If $f$ fixes $x_g$, then a direction $\vec{v}$ at $x_g$ is bad if and only if $\varphi(\vec{v})$ is a common zero of $\tilde{P}$ and $\tilde{Q}$.
    \item Suppose $f$ fixes $x_g$ and let $\vec{v}$ be a direction at $x_g$. Then there exists $x'$ in the direction $\vec{v}$ such that $f$ maps the segment $[x_g,x']$ linearly and surjectively onto its image, with slope $\deg_{\varphi(\vec{v})}(\tilde{f})$ with respect to the hyperbolic distance.
\end{enumerate}
\end{prop}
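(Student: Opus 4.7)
I would normalize $f=[P:Q]$ so that $\max\{|a_i|,|b_j|\}=1$; this gives well-defined reductions $\tilde P,\tilde Q\in\tilde k[z_0,z_1]$, and $\tilde f=[\tilde P:\tilde Q]$ after cancelling any common factor. All four claims follow from elementary manipulations of this normal form together with the identification $\varphi:T_{x_g}\P^{1,an}_k\simeq\P^1(\tilde k)$ described above. For (1) I evaluate the push-forward seminorm $h\mapsto|h\circ f|_{x_g}$ on $k[z]$: it equals $x_g$ if and only if $\tilde P,\tilde Q$ are not proportional in $\tilde k[z_0,z_1]$, i.e., $\deg\tilde f\ge 1$. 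For (2), the local degree of $f$ at a fixed Type-2 point coincides with the degree of the tangent map (a standard fact from Berkovich dynamics, see \cite{BR10,Ben19}); the commutative diagram preceding the statement then identifies this tangent map with $\tilde f$, giving $\deg_{x_g}(f)=\deg\tilde f$.

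For (3), I fix a lift $\alpha\in\P^1(k)$ of $a=\varphi(\vec v)$ so that $U(\vec v)=\{|z-\alpha|<1\}$ (with the obvious modification if $a=\infty$). If $a$ is not a common zero of $\tilde P,\tilde Q$, then $\tilde f(a)$ is well-defined, and a Taylor expansion of $f$ around $\alpha$ shows $f(U(\vec v))=U(T_{x_g}f(\vec v))$, whose closure excludes $x_g$; hence no preimage of $x_g$ lies in $U(\vec v)$, so $\vec v$ is good. The converse uses the mass balance
\[
\sum_{f(y)=x_g}\deg_y(f)=d
\]
combined with (2) and the degree-drop identity $\deg\tilde f=d-\sum_{a\text{ common zero}}m_a$: the preimages of $x_g$ distinct from $x_g$ carry total mass $\sum_a m_a$, and since good directions contribute none to this mass, each common zero must produce a bad direction receiving mass exactly $m_a$.

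For (4), I would translate so that $\alpha=0$ and $f(\alpha)=0$, and expand $f(z)=\sum_{i\ge 1}a_iz^i$ with $|a_i|\le 1$. The hypothesis $e=\deg_a\tilde f$ translates to $|a_i|<1$ for $i<e$ and $|a_e|=1$. Then for $r$ sufficiently close to $1$ from below, $|f|_{\zeta(0,r)}=\max_i|a_i|r^i=r^e$, so $f(\zeta(0,r))=\zeta(0,r^e)$, which gives $d_k(x_g,f(\zeta(0,r)))=-e\log r=e\cdot d_k(x_g,\zeta(0,r))$; the linearity claim follows by taking $x'=\zeta(0,r_0)$ for any such $r_0$. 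I expect the subtlest step to be the converse direction of (3): ruling out common zeros among good directions is immediate from the local open-mapping description, but showing that \emph{every} common zero actually yields a bad direction requires the mass-balance argument above, where the local degree identity from (2) precisely accounts for the multiplicities $m_a$ that escape from $x_g$.
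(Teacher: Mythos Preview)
The paper gives no proof of this proposition beyond the citation ``See~\cite[Theorems~7.22 and~7.34]{Ben19}'', so your self-contained argument is already doing more than the paper. The normalization strategy and parts (1)--(2) are correctly sketched, and (4) is correct for good directions.

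Two points need more care. In (3), your mass balance shows that the total preimage mass of $x_g$ lying in common-zero directions equals $\sum_a m_a$, but this alone does not force \emph{each} such direction to receive positive mass---a priori all of it could sit over a single common zero. You need the complementary upper bound (e.g.\ via a Newton-polygon count that $f|_{U(\vec v_a)}$ hits any point at most $m_a$ times), after which equality is forced direction by direction. In (4), the expansion $f(z)=\sum_{i\ge1}a_iz^i$ with $|a_i|\le1$ is only available when $\vec v$ is good: for a bad direction one has $f(U(\vec v))=\P^{1,\mathrm{an}}_k$, so $f$ has poles inside the open unit ball and no such bounded power series exists (and translating the target by $f(\alpha)$ need not make $\tilde f(0)=0$, since $\widetilde{f(\alpha)}$ and $\tilde f(\tilde\alpha)$ can differ at a common zero). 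The fix is to compute $|P|_{\zeta(0,r)}$ and $|Q|_{\zeta(0,r)}$ separately: if $0$ is a common zero of multiplicity $m$ and $\deg_0\tilde f=e$ (after moving $\tilde f(0)$ to $0$), these equal $r^{m+e}$ and $r^{m}$ for $r$ close to $1$, and the quotient still gives slope $e$.
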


\begin{proof}
See~\cite[Theorems~7.22 and~7.34]{Ben19}.
\end{proof}

\subsection{Bernoulli map} 
\label{sec: Bernoullimap}
Let $k$ be a non-trivial, non-Archimedean and algebraically closed  field. 

In this subsection, we  study the localization of periodic points of a Bernoulli map, which serves as an important ingredient in the proof of Theorem~\ref{thm: multiplierdicho}. For additional background and details, we refer the reader to~\cite[\S 5.2]{FR10} and \cite{FRL25}.

Recall a rational map $f:\P^{1,\mathrm{an}}_{k}\to \P^{1,\mathrm{an}}_{k}$ of degree $d\ge 2$
is said to be Bernoulli if the convex hull of its Julia set is a non-degenerate geodesic segment  $I=[a,b]\subseteq \H_k$. It is equivalent to say that $f$ does not have potential good reduction, and there exists a backward invariant segment (i.e., $f^{-1}(I)\subseteq I$) contained in $\H_k$.

In this case, there exist
an integer $m\ge2$ and a partition
\[
a= a_1<b_1\le a_2<\cdots<b_m= b
\]
such that, writing $I_j=[a_j,b_j]\subseteq I$,
\begin{enumerate}
\item $f^{-1}(I)=I_1\cup\cdots\cup I_m$;
\item for each $j=1,\dots,m$, the restriction $f|_{I_j}:I_j\to I$ is  bijective.
\end{enumerate}

When $f$ is Bernoulli,  its restriction $f\colon I_j \to I$ is affine w.r.t the metric $d_\H$, and its slope is equal to $\varepsilon_j d_{j}$  with $d_{j}\in\Z_{\ge 2}$, $\varepsilon_j\in\{-1,+1\}$ and the signs are alternating $\varepsilon_j=-\varepsilon_{j+1}$. In particular, we have $\mathrm{length}(I)=d_j\times \mathrm{length}(I_j)$ so that $\sum_{j=1}^m d_j^{-1} \le 1$. Since $f^{-1}(I) = I_1 \cup \cdots \cup I_m$, we also have
$d=\sum_{j=1}^m d_j$. 

The Julia set $\cJ(f)$ of $f$ satisfies $\cJ(f)=\bigcap_{i=0}^{\infty}f^{-i}(I)$. It is a closed segment  if  $\sum_{j=1}^m d_j^{-1} = 1$, and a Cantor subset of a segment in $\H_k$ if $\sum_{j=1}^m d_j^{-1} < 1$.

\begin{prop}
\label{prop: fixedbernoullilocal}
 Each $I_j$ contains a unique fixed point of $f$, which is repelling and has local degree $d_j$.
\end{prop}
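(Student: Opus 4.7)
The plan is to obtain the fixed point via a contraction argument on the inverse branch and then to pin down its local degree via the conservation law $\sum_{q\in f^{-1}(p)}\deg_q(f)=d$.

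For existence and uniqueness, I would consider the inverse branch $g_j := (f|_{I_j})^{-1}\colon I\to I_j$. Since $f|_{I_j}$ is affine in the hyperbolic metric with slope $\varepsilon_j d_j$ and $d_j\ge 2$, the map $g_j$ is an affine contraction of ratio $1/d_j\le 1/2$ on the complete segment $(I,d_k)$, sending $I$ into its sub-segment $I_j\subseteq I$. Banach's fixed point theorem then produces a unique fixed point $p_j$ of $g_j$ in $I$, which automatically lies in $I_j$ (since $g_j(I)\subseteq I_j$) and is a fixed point of $f$. Conversely, any fixed point of $f$ inside $I_j$ is also fixed by $g_j$, so uniqueness follows.

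For the local degree, I observe that because each $f|_{I_i}\colon I_i\to I$ is bijective and $p_j\in I$, every $I_i$ contains a unique preimage $q_i$ of $p_j$, with $q_j=p_j$. The crucial input is the lower bound $\deg_{q_i}(f)\ge d_i$: the affine slope $d_i$ of $f|_{I_i}$ at $q_i$ equals the local multiplicity of $f$ at $q_i$ in the tangent direction along $I_i$, and this multiplicity is dominated by the total local degree. Concretely, conjugating source and target by isometries of $\H_k$ so that $q_i$ and $p_j$ both become the Gauss point $x_g$ and applying Proposition~\ref{prop: basicreduction}(2) and (4) gives respectively $\deg_{q_i}(f)=\deg(\tilde f)$ and the identification of the slope $d_i$ with $\deg_{\varphi(\vec v)}(\tilde f)\le\deg(\tilde f)$. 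Summing over $i$ and invoking conservation of degree yields
\[
d \;=\; \sum_{q\in f^{-1}(p_j)}\deg_q(f) \;\ge\; \sum_{i=1}^m \deg_{q_i}(f) \;\ge\; \sum_{i=1}^m d_i \;=\; d,
\]
forcing equality throughout. Hence $f^{-1}(p_j)=\{q_1,\dots,q_m\}$, $\deg_{q_i}(f)=d_i$ for each $i$, and in particular $\deg_{p_j}(f)=d_j\ge 2$; since $p_j\in\H_k$, this exhibits $p_j$ as repelling in the sense of \S\ref{sec: Julia}.

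The most delicate step is the lower bound $\deg_{q_i}(f)\ge d_i$, which converts a per-direction affine slope into a bound on the total local degree. The conjugation-to-Gauss trick combined with Proposition~\ref{prop: basicreduction} provides this cleanly, modulo verifying that the relevant points are Type-2 so that $\PGL_2(k)$ can move them to $x_g$; in the Bernoulli setting this follows from the location of $p_j$ and $q_i$ in $\H_k$ together with the expanding behavior of $f$ along $I_i$. Once the bound is in hand, the global degree conservation saturates all inequalities and simultaneously yields both the exact value of the local degree and the fact that the $q_i$'s exhaust $f^{-1}(p_j)$.
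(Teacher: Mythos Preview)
Your existence/uniqueness argument via the contracting inverse branch is equivalent to the paper's direct affine argument. For the local degree, however, you take a genuinely different route. The paper works locally at $p_j$: since $I$ is backward invariant, every tangent direction at $p_j$ off $I$ is good and maps to a direction off $I$, so one of the two directions along $I$ at $p_j$ is totally ramified for $T_{p_j}f$, and Proposition~\ref{prop: basicreduction} then gives $\deg_{p_j}(f)=d_j$ in one stroke. Your approach is global, bounding each $\deg_{q_i}(f)$ below by the slope $d_i$ and saturating the identity $\sum_i d_i=d$; this has the pleasant side effect of identifying $f^{-1}(p_j)=\{q_1,\dots,q_m\}$ together with all local degrees at the preimages, which the paper's argument does not extract.

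Two small points deserve more care. First, the $q_i$ need not be pairwise distinct: when $p_j\in\{a,b\}$ (which occurs exactly for $j\in\{1,m\}$ with $\varepsilon_j=+1$) and two consecutive intervals touch at $b_i=a_{i+1}$, one can have $q_i=q_{i+1}$, and then your inequality $\sum_{q\in f^{-1}(p_j)}\deg_q(f)\ge\sum_{i=1}^m\deg_{q_i}(f)$ double-counts. The repair is to observe that at such a coincidence both directions along $I$ map, by the sign alternation, to the \emph{same} tangent direction at $p_j$, so in fact $\deg_{q_i}(f)\ge d_i+d_{i+1}$, and the chain survives after grouping coincident $q_i$'s. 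Second, ``location in $\H_k$ plus expanding behavior'' is not by itself an argument that $q_i$ is Type~2; but the bound $\deg_x(f)\ge(\text{slope of }f\text{ along any direction at }x)$ holds at Type~3 points as well (indeed with equality, since such points have only two tangent directions), so the conclusion is unaffected.
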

\begin{proof}
Without loss of generality, we suppose $f|_{I_j}$ has positive slope. Then, for any $x\in I_j$,  at least one of the two tangent directions: the one pointing toward $a$ or toward $b$, is backward invariant. By Proposition~\ref{prop: basicreduction}, we have $ \deg_x f=d_j\ge 2$. In particular, any fixed point of $f$ in $I_j$ is repelling.
 
 Since $f$ is affine on $I_j$ with slope $d_j\ge 2$, the equation $f(x)=x$ has at most one solution in $I_j$. Also, since the restriction $f|_{I_j}\colon I_j\to I$ is a bijection and $I_j$ is contained in $I$, a fixed point does exist.
\end{proof}

\begin{prop}\label{prop: fixedpoint}
Let \(p\) be a Type-1 fixed point of \(f\), and let \(x\) be the projection of \(p\) onto \(I\). Then \(f(x) = x\). Moreover, $p$ defines a good direction at $x$.
\end{prop}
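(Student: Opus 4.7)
Let $U$ denote the connected component of $\Omega := \P^{1,an}_k \setminus I$ containing $p$; since $p \in \P^1(k)$ and $I \subseteq \H_k$, $p \notin I$, so $U$ is well-defined and $\partial U = \{x\}$. The plan is to first establish the coarse invariance $f(U) \subseteq U$ and $f(x) \in \overline U$, and then to exclude the case $f(x) \in U \setminus \{x\}$ by a direct combinatorial argument exploiting the alternating signs of the slopes of $f|_{I_j}$.

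From $f^{-1}(I) \subseteq I$ one gets $f(\Omega) \subseteq \Omega$. Since $f(U)$ is connected and contains $f(p) = p \in U$, it lies in the single component $U$, so $f(U) \subseteq U$; continuity at the boundary then forces $f(x) \in \overline U = U \cup \{x\}$. Suppose for contradiction that $f(x) \in U \setminus \{x\}$. Then $f(x) \notin I$, so $x \notin f^{-1}(I) = \bigcup_j I_j$, which combined with $x \in I$ places $x$ strictly inside some gap $G = (b_j, a_{j+1})$ with $1 \le j \le m-1$. The alternating-sign condition $\varepsilon_{j+1} = -\varepsilon_j$ yields $f(b_j) = f(a_{j+1}) =: c_j \in \{a, b\}$ (check the four sign combinations). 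Consequently $f(\overline G) \cap I = \{c_j\}$, while $f(G) \subseteq \Omega$ is connected, so it lies in a single component $W$ of $\Omega$; the connectedness of $f(\overline G) = f(G) \cup \{c_j\}$ then forces $\partial W = c_j$. Because $f(x) \in f(G) \cap U \subseteq W \cap U$ and components of $\Omega$ are disjoint, $U = W$, so $\partial U = c_j \in \{a, b\}$; this contradicts $\partial U = x \in (b_j, a_{j+1})$, a segment disjoint from the endpoints of $I$.

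The good-direction assertion then follows immediately: $f^{-1}(x) \subseteq f^{-1}(I) = \bigcup_l I_l \subseteq I$ is disjoint from $U \subseteq \Omega$, so the open set $U = U(\vec v_p)$ contains no preimage of $f(x) = x$. The only substantive step is spotting that $f$ collapses the two endpoints $b_j, a_{j+1}$ of each gap onto a common endpoint of $I$; once this is noted, the rest is a purely combinatorial check on components of $\Omega$.
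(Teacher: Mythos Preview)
Your proof is correct and takes a genuinely different route from the paper's. The paper argues in the opposite order: it first shows that $\vec v_p$ is good by invoking the fact that a bad direction would force $f(U(\vec v_p))=\P^{1,an}_k$, contradicting $U(\vec v_p)\cap f^{-1}(I)=\emptyset$; from this it deduces $f(x)\in[x,p]$, and then obtains $f(x)\in I$ (when $x$ is interior) via the Julia--set description $\cJ(f)=\bigcap_{i\ge 0}f^{-i}(I)$. You instead work directly with the component $U$ of $\Omega$ containing $p$: the easy invariance $f(U)\subseteq U$ already gives $f(x)\in U\cup\{x\}$, and you rule out $f(x)\in U$ by a gap argument whose key input is the alternating--sign property $\varepsilon_{j+1}=-\varepsilon_j$, which collapses both endpoints of $(b_j,a_{j+1})$ onto the same endpoint of $I$. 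Your argument is more elementary in that it avoids the Julia--set characterisation, at the price of leaning on the specific combinatorics of Bernoulli maps; the paper's route is more conceptual and would transfer to settings without the alternating structure. Your final deduction of goodness from $f^{-1}(x)\subseteq f^{-1}(I)\subseteq I$ (once $f(x)=x$ is in hand) is in fact cleaner than the paper's. One small identification you use tacitly: $U=U(\vec v_p)$ holds because $x$, being the projection of $p$ onto $I$, forces $\vec v_p$ to differ from the directions toward $a$ and $b$, whence $U(\vec v_p)\cap I=\emptyset$.
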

\begin{proof}
Denote by $\vec{v}$ the direction at $x$ defined by $p$. Then $U(\vec{v})\cap I=\emptyset.$ Recall if $\vec{v}$ is a bad direction at \( x \), then \( f(U(\vec{v})) = \mathbb{P}_k^{1, \mathrm{an}} \), implying \( U(\vec{v}) \cap f^{-1}(I) \neq \emptyset \), which contradicts the backward invariance of \( I \). Hence, $\vec{v}$ is a good direction at $x$ and \(f(U(\vec{v}))=U(T_{x}\vec{v})\).

Suppose \(f(x) \notin [x, p]\), then \(f(U(\vec{v}))=U(T_{x}\vec{v})\) intersects $I$.  But since \(I\) is backward invariant and \(U(\vec{v})\) does not intersect \(I\), this leads to a contradiction. Thus \(f(x) \in [x, p]\).

We now claim \(f(x) \in I\).  If \(x\) is an endpoint of \(I\), then \(f(x)\) is also an endpoint, hence lies in \(I\).  If \(x\) lies in the interior of \(I\), then \(p \in U^{a,b}\), where \(U^{a,b}\) denotes the connected component of \(\mathbb{P}^{1,\mathrm{an}}_k \setminus \{a,b\}\) containing the open segment \((a,b)\). Since \(p\) is fixed, it lies in \(\bigcap_{i=0}^\infty f^{-i}(U^{a,b})\), implying \(x \in \bigcap_{i=0}^{\infty}f^{-i}(I),\) which is the Julia set $\mathcal{J}(f)$. Thus \(f(x) \in f(\mathcal{J}(f)) = \mathcal{J}(f) \subseteq I\). 

Since $I\cap[x,p]=x$, we conclude \(f(x) = x\).
\end{proof}

\begin{thm}
\label{thm: positionfixedpoints}
Suppose that the characteristic of $\tilde{k}$ is $0$, and that \( x_g \) is a repelling fixed point where the slope of \( f|_I \) at \( x_g \) equals \( \pm\delta \) for some integer $\delta\ge 2$. Then the following statements hold: 
\begin{enumerate}
    \item  If $x_g \in (a, b)$, then $\tilde{f}$ is conjugate to the map $z \mapsto z^{\pm \delta}$, and $f$ admits precisely $\delta \mp 1$ Type-1 fixed points projecting onto  $x_g$. For any such point $p$, there exists a root of unity $\zeta$ with $\widetilde{f'(p)} = \delta\tilde{\zeta}.$
    \item If \( x_g = a \) (resp. \( b \)), then \( \tilde{f} \) is conjugate to a polynomial of degree \( d_1 \) (resp. \( d_k \)). In this case \( f \) has exactly \( d_1\) (resp. $d_k$) Type-1 fixed points projecting to \( x_g \).
\end{enumerate}
\end{thm}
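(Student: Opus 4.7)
The plan is to combine Proposition~\ref{prop: basicreduction}, the Riemann--Hurwitz formula applied to the reduction $\tilde f$, and Hensel's lemma. Since $f$ fixes $x_g$ with local degree $\delta$, Proposition~\ref{prop: basicreduction}(2) gives $\deg(\tilde f)=\delta$. Backward invariance of $I$ implies that every direction at $x_g$ pointing along $I$ is good (as in the proof of Proposition~\ref{prop: fixedpoint}), and by Proposition~\ref{prop: basicreduction}(4) its local degree under $\tilde f$ equals the slope of $f|_I$ at $x_g$, namely $\delta$.

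For case~(1), there are two such directions $\vec v_a,\vec v_b$ along $I$ at $x_g$, each contributing a totally ramified point of degree $\delta$ for $\tilde f$. Riemann--Hurwitz for $\tilde f\colon \P^1(\tilde k)\to\P^1(\tilde k)$ then reads
\[
2\delta-2=\sum_{p\in\P^1(\tilde k)}(\deg_p\tilde f-1)\geq 2(\delta-1)=2\delta-2,
\]
forcing equality, so these two points are the only ramification points of $\tilde f$. A degree-$\delta$ rational map with exactly two totally ramified points is conjugate to $z\mapsto z^{\pm\delta}$, with the sign determined by whether $T_{x_g}f$ fixes the two directions individually (slope $+\delta$) or swaps them (slope $-\delta$). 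To count the Type-1 fixed points projecting to $x_g$, I would use Proposition~\ref{prop: fixedpoint}: each such $p$ defines a fixed good direction at $x_g$, which cannot lie along $I$ (otherwise $p$ would project elsewhere on $I$), and hence corresponds via $\varphi$ to a fixed point of $\tilde f$ in $\tilde k^*$ (after placing the two totally ramified points at $0$ and $\infty$). For $\tilde f(z)=z^{\delta}$ these are the $\delta-1$ roots of $z^{\delta-1}=1$; for $\tilde f(z)=z^{-\delta}$ they are the $\delta+1$ roots of $z^{\delta+1}=1$. At each such $\alpha$ one computes $\tilde f'(\alpha)=\pm\delta$, which differs from $1$ in $\tilde k$ since $\mathrm{char}(\tilde k)=0$ and $\delta\geq 2$, so Hensel's lemma lifts $\alpha$ uniquely to a Type-1 fixed point $p$ of $f$. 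By the conjugation invariance of the multiplier at a fixed point, $\widetilde{f'(p)}=\tilde f'(\alpha)=\pm\delta=\delta\tilde\zeta$ with $\zeta\in\{\pm 1\}$, completing the case.

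Case~(2) proceeds similarly. Suppose $x_g=a$ (the case $x_g=b$ is symmetric). Only the direction $\vec v_b$ lies along $I$; by the same argument it is fixed and totally ramified under $\tilde f$ with local degree $d_1$. A degree-$d_1$ rational map with a totally ramified fixed point is conjugate, after sending that point to $\infty$, to a polynomial $P$ of degree $d_1$, establishing the first claim. Type-1 fixed points of $f$ projecting to $x_g$ correspond to fixed points of $\tilde f$ distinct from $\vec v_b$, i.e.\ to finite fixed points of $P$; since $P(z)-z$ has degree $d_1$, there are exactly $d_1$ such fixed points counted with multiplicity.

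The main technical obstacle I foresee is the multiplicity matching in case~(2): when $P'(\alpha)=1$ at some finite fixed point of $P$, Hensel's lemma does not directly yield a simple lift, so one must verify that the multiplicity of $\alpha$ as a zero of $P(z)-z$ coincides with the total multiplicity of the Type-1 fixed points of $f$ reducing to $\alpha$. I expect this to follow by comparing the fixed-point polynomials of $f$ and $\tilde f$ in a normalized homogeneous representation $f=[p:q]$, using that the reduction of $p(z)-zq(z)$ encodes the reduction of the fixed-point divisor. Case~(1) escapes this issue since $\tilde f'(\alpha)=\pm\delta\neq 1$.
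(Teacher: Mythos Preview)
Your argument is correct and structurally parallel to the paper's, with two differences in the toolkit. First, instead of Riemann--Hurwitz, the paper identifies $\tilde f$ directly: after placing $I$ on the segment $[0,\infty]$, backward invariance of $I$ forces every direction at $x_g$ other than those toward $0$ and $\infty$ to be good, so the common factor $\tilde G=\gcd(\tilde P,\tilde Q)$ vanishes only at $0,\infty$; together with total ramification at those two points this pins down $\tilde f(z)=\tilde\lambda z^{\pm\delta}$ without appealing to the ramification formula. Second, for the lifting step the paper does not use Hensel but rather factors the fixed-point form $z_1P-z_0Q=\prod_i(v_iz_0-u_iz_1)$ over $k$ and reduces (Lemma~\ref{lem: liftfixedpt}); since $z_1\tilde P_1-z_0\tilde Q_1$ then divides $\prod_i(\tilde v_iz_0-\tilde u_iz_1)$, each fixed point of $\tilde f$ lifts with the correct multiplicity. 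This is precisely the ``comparing fixed-point polynomials'' fix you anticipate for case~(2), and it makes the multiplicity matching automatic.

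One point to sharpen: the equality $\widetilde{f'(p)}=\tilde f'(\tilde p)$ is not a consequence of conjugation invariance of the multiplier. What is actually used is that $\tilde p$ lies in a good direction, i.e.\ $\tilde G(\tilde p)\neq 0$; writing $f=P/Q$ with normalized coefficients one then has $|Q(p)|=1$, so reduction commutes with forming $(P'Q-PQ')/Q^2$ at $p$, and the factor $\tilde G^2$ cancels between numerator and denominator to give $\tilde f'(\tilde p)$. Your computed value $\pm\delta$ is correct, and of course $\pm 1$ are roots of unity.
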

\begin{lemma}
\label{lem: liftfixedpt}
Suppose $x_g$ is a repelling fixed point of $f$. Then any fixed point of $\tilde{f}$ lifts to a  fixed point of $f$.
\end{lemma}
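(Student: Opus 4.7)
The plan is to reduce the statement to producing a root of a polynomial $R(z)\in k^{\circ}[z]$ inside the open unit disk, and then to extract such a root via a Newton polygon argument. Since $x_{g}$ is a repelling fixed point, Proposition~\ref{prop: basicreduction}(2) gives $\deg(\tilde{f})=\deg_{x_{g}}(f)\ge 2$, so in particular $\tilde{f}\neq \mathrm{id}$. Let $\tilde{\alpha}\in\P^{1}(\tilde{k})$ be a fixed point of $\tilde{f}$, and let $\vec v$ be the direction at $x_{g}$ corresponding to $\tilde\alpha$ under $\varphi$. Because $\tilde\alpha$ lies in the domain of $\tilde f$, the polynomials $\tilde P, \tilde Q$ have no common zero at $\tilde\alpha$, so Proposition~\ref{prop: basicreduction}(3) shows that $\vec v$ is a good direction.

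Next I would normalize the picture. Using an element of $\PGL_{2}(k^{\circ})$ — which fixes $x_{g}$ and whose reduction acts transitively on $\P^{1}(\tilde k)$ — I reduce to the case $\tilde\alpha=0$, so that $U(\vec v)=\{z\in k : |z|<1\}$. Fix a representation $f=[P:Q]$ with coefficients in $k^{\circ}$ and $\max\{|a_{i}|,|b_{j}|\}=1$. The finite fixed points of $f$ in $\A^{1}(k)$ are then precisely the zeros of $R(z):=P(z)-zQ(z)\in k^{\circ}[z]$, and the lemma reduces to exhibiting one such zero in $\{|z|<1\}$.

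For this step I inspect the reduction $\tilde R(z)=\tilde P(z)-z\tilde Q(z)\in \tilde k[z]$. Two facts are decisive. First, $\tilde R\not\equiv 0$: otherwise $\tilde P = z\tilde Q$ would force $\tilde f(z)=z$, contradicting $\deg\tilde f\ge 2$. Second, $\tilde R(0)=0$: since $\vec v$ is good, $\tilde Q(0)\neq 0$, and combined with $\tilde f(0)=0$ this forces $\tilde P(0)=0$, hence $\tilde R(0)=\tilde P(0)=0$. Writing $\tilde R(z)=z^{m}\tilde h(z)$ with $m\ge 1$ and $\tilde h(0)\neq 0$, the coefficients $c_{i}$ of $R$ satisfy $|c_{i}|<1$ for $i<m$ and $|c_{m}|=1$. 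The Newton polygon of $R$ must therefore descend from its starting point to the vertex $(m,0)$, and the total horizontal length of its negative-slope segments equals $m$. This produces $m\ge 1$ roots $\alpha\in k$ with $|\alpha|<1$ (counted with multiplicity); any such $\alpha$ is a fixed point of $f$ lying in $U(\vec v)$, hence a lift of $\tilde\alpha$.

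The main obstacle is really bookkeeping rather than a conceptual difficulty. One must check that (i) the normalization placing $\tilde\alpha$ at $0$ is realized inside $\PGL_{2}(k^{\circ})$, which follows from the surjectivity of the reduction $\PGL_{2}(k^{\circ})\twoheadrightarrow \PGL_{2}(\tilde k)$ on $\P^{1}(\tilde k)$, and (ii) the cancellations in $P(z)-zQ(z)$ do not spoil the Newton polygon count, which is guaranteed by the non-vanishing $\tilde R\not\equiv 0$ that $\deg\tilde f\ge 2$ provides. Once these are in place, the Newton polygon bound immediately yields the required root.
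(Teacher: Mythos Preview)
Your Newton polygon approach works and is genuinely different from the paper's proof, but one step in your justification is incorrect (though harmlessly so). You claim that because $\tilde\alpha$ lies in the domain of $\tilde f$, the polynomials $\tilde P,\tilde Q$ have no common zero at $\tilde\alpha$, so $\vec v$ is good. This is wrong: the map $\tilde f$ is obtained from $[\tilde P:\tilde Q]$ only after dividing out $\tilde G=\gcd(\tilde P,\tilde Q)$, so $\tilde f$ is well defined everywhere on $\P^1(\tilde k)$, including at the zeros of $\tilde G$; a fixed point $\tilde\alpha$ of $\tilde f$ can perfectly well be a zero of $\tilde G$, in which case $\vec v$ is bad. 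Fortunately you only use goodness to deduce $\tilde R(0)=\tilde P(0)=0$, and this holds regardless: writing $\tilde P=\tilde G\tilde P_1$, $\tilde Q=\tilde G\tilde Q_1$ with $\gcd(\tilde P_1,\tilde Q_1)=1$, the condition $\tilde f(0)=0$ forces $\tilde P_1(0)=0$, hence $\tilde P(0)=\tilde G(0)\tilde P_1(0)=0$. With this correction your Newton polygon argument goes through.

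The paper's argument is more global and avoids Newton polygons: it factors $z_1P-z_0Q=\prod_{i=1}^{d+1}(v_iz_0-u_iz_1)$ over the algebraically closed field $k$ with $\max\{|u_i|,|v_i|\}=1$, reduces modulo $k^{\circ\circ}$, and observes that $z_1\tilde P_1-z_0\tilde Q_1$ (whose zeros are the fixed points of $\tilde f$) divides $\prod_i(\tilde v_iz_0-\tilde u_iz_1)$. Hence every fixed point of $\tilde f$ is the reduction of some $[u_i:v_i]$. This handles all fixed points at once with no normalization step; your route treats one fixed point at a time but makes the multiplicity of lifts (exactly $m$ roots in the open disk, counted with multiplicity) explicit.
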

\begin{proof}
Let $f = [P : Q]$ be a homogeneous representation such that the maximal absolute value of the coefficients of  $P$ and $Q$ is equal to $1$.
Since $k$ is algebraically closed, we can factor
\[
z_1 P - z_0 Q = \prod_{i=1}^{d+1} (v_i z_0 - u_i z_1),
\]
with \( \max\{ |u_i|, |v_i| \} = 1 \). The fixed points of $f$ are given by $[u_i\colon v_i].$

Let $\tilde{G}$ be the greatest common divisor of \( \tilde{P} \) and \( \tilde{Q} \).
Write \( \tilde{P} = \tilde{G}\tilde{P}_1 \) and \( \tilde{Q} = \tilde{G}\tilde{Q}_1 \).
A fixed point of \( \tilde{f} \) corresponds to a zero of
$z_1 \tilde{P}_1 - z_0 \tilde{Q}_1,$
which divides
$\prod_{i=1}^{d+1} (\tilde{v}_i z_0 - \tilde{u}_i z_1).
$
Therefore, each fixed point of \( \tilde{f} \) lifts to a fixed point of \( f \).
\end{proof}

\begin{proof}[Proof of Theorem~\ref{thm: positionfixedpoints}]
Assume that $I$ is contained in the geodesic segment joining $0$ and $\infty$. Choose a normalized representation $f=[P:Q]$  so that the maximal absolute value of coefficients of $P$ and $Q$ is equal to $1$. Let $\tilde{G}=\gcd(\tilde{P},\tilde{Q})$ and write $\tilde{P}=\tilde{G}\tilde{P}_1$, $\tilde{Q}=\tilde{G}\tilde{Q}_1$.

Suppose $x_g\in (a,b)$ and the restriction $f|_I$ has positive slope at $x_g$. Since $I$ is backward invariant, every tangent direction at $x_g$ other than those pointing toward $0$ and $\infty$ is good. Hence, for any $\alpha\in\tilde{k}^*$, we have $\tilde{G}(\alpha)\neq 0$.

 By Proposition~\ref{prop: basicreduction}, the reduction $\tilde{f}$ has degree $\delta\ge 2$, and both $0$ and $\infty$ are totally ramified fixed points of local degree $\delta$. Thus $\tilde{f}(z)=\tilde{\lambda}\,z^\delta$ for some unit $\tilde{\lambda}\in\tilde{k}^*$. Choosing $u\in k^{*}$ with reduction $\tilde{u}$ satisfying $\tilde{u}^{\delta-1}=\tilde{\lambda}^{-1}$ and conjugating by $z\mapsto uz$, we may assume $\tilde{f}(z)=z^\delta$. Then the  fixed points of $\tilde{f}$ are exactly the $\delta-1$ roots of $z^{\delta-1}=1$ in $\tilde{k}^*$, which is a reduction of a root of unity $\zeta$. 

 By Lemma~\ref{lem: liftfixedpt}, we obtain $\delta-1$ distinct Type-1 fixed points of $f$ with modulus $1$. Therefore the projections to $I$ all equal $x_g$. Conversely, by Proposition~\ref{prop: fixedpoint}, any Type-1 fixed point $p$ projecting to $x_g$ determines a good direction at $x_g$, so $\tilde{f}(\tilde{p})=\tilde{p}$; thus $p$ coincides with one of the $\delta-1$ lifts constructed above. 
Moreover, since $\tilde{G}(\tilde{p})\not=0$, we have  $\widetilde{f'(p)}=(\tilde{f})'(\tilde{p})=\delta\tilde{\zeta}^{\delta-1}.$

Suppose $x_g = a$.
By Proposition~\ref{prop: basicreduction}, the reduction \( \tilde{f} \) has degree \( d_1 \), and \( \infty \) is a totally ramified fixed point of local degree \( d_1 \).
Hence \( \tilde{f} \) is a polynomial of degree \( d_1 \).
By Lemma~\ref{lem: liftfixedpt}, the fixed points of \( \tilde{f} \) in \( \tilde{k} \) lift to fixed points of \( f \).
We thus obtain \( d_1  \) fixed points whose projection to $I$ is $x_g$.
\end{proof}

\section{Non-standard analysis and higher rank valuations}
\subsection{Ultrafilters}
\label{sec: ultrafilter}
We briefly recall the notion of ultrafilters on the set $\N$ of natural numbers. We refer to \cite{comfort2012theory} for a detailed treatment. 

\begin{defi}
    A subset $\omega$ of the $2^\N$ is called an ultrafilter, if 
    \begin{enumerate}
        \item $\emptyset\notin\omega$;
        \item if $E,F\in\omega$, then $E\cap F\in\omega;$
        \item if $E\in\omega$ and $E\subseteq F$, then $F$ is also in $\omega;$
        \item if $E\subseteq \N$, either $E$ or $E^c=  \N\setminus E$ is in $\omega.$
    \end{enumerate}
    \end{defi}

A set in $\omega$ is called an $\omega$-big set. 
 If a proposition is satified for a $\omega$-big set, we say that this proposition is true $\omega$-almost surely.

An ultrafilter $\omega$ on $\mathbb{N}$ is called principal if there exists some $n \in \mathbb{N}$ such that $\omega = \{\, F \subseteq \mathbb{N} : n \in F \,\}.$ In this case, we denote $\omega$ by $\omega_{n}$.
The existence of non-principal ultrafilters is guaranteed by the Zorn's lemma.

 \smallskip

 One can define the limit of a sequence with respect to an ultrafilter.  The next proposition shows that such a limit always exists when the sequence takes values in a compact set.

\begin{pd}
Fix an ultrafilter $\omega$ on $\N$. Let $K$ be a compact Hausdorff space, and let $(x_n)_{n\in\N}$ be a sequence in $K$. Then there exists a unique point $x \in K$ such that for every neighbourhood $V$ of $x$, one has $x_n \in V$ $\omega$-almost surely. 

This point $x$ is called the $\omega$-limit of $(x_n)_{n\in\N}$, and we write $x=\lim_{\omega} x_n $.
\end{pd}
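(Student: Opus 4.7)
The statement is the classical fact that ultrafilter limits exist and are unique in compact Hausdorff spaces. My plan is to handle uniqueness and existence separately, each as a short argument using two of the defining axioms of an ultrafilter.

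For uniqueness, suppose there were two candidate limits $x\ne y\in K$. Since $K$ is Hausdorff, I can choose disjoint open neighbourhoods $U\ni x$ and $V\ni y$. By assumption both $A=\{n\in\N:x_n\in U\}$ and $B=\{n\in\N:x_n\in V\}$ belong to $\omega$. Axiom (2) then forces $A\cap B\in\omega$, but $A\cap B\subseteq\{n:x_n\in U\cap V\}=\emptyset$, and axiom (3) would put $\emptyset\in\omega$, contradicting axiom (1). So at most one such $x$ exists.

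For existence, I argue by contradiction. Suppose no $x\in K$ has the desired property. Then for each $x\in K$ I can pick an open neighbourhood $V_x$ with $\{n:x_n\in V_x\}\notin\omega$. By axiom (4), this means $\{n:x_n\notin V_x\}\in\omega$. By compactness of $K$, finitely many of these neighbourhoods cover: $K=V_{x_1}\cup\cdots\cup V_{x_m}$. Taking complements in $\N$ of the corresponding sets and intersecting, axiom (2) gives
\[
\bigcap_{i=1}^{m}\{n:x_n\notin V_{x_i}\}\in\omega.
\]
But this intersection is $\{n:x_n\notin V_{x_1}\cup\cdots\cup V_{x_m}\}=\{n:x_n\notin K\}=\emptyset$, again contradicting axiom (1).

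Neither step is the main obstacle; both are a direct bookkeeping exercise with the four ultrafilter axioms, the only non-trivial ingredient being the compactness of $K$ used to reduce to a finite subcover. The proof could alternatively be phrased by showing that the push-forward $f_*\omega=\{S\subseteq K:\{n:x_n\in S\}\in\omega\}$ is an ultrafilter on $K$ and that every ultrafilter on a compact Hausdorff space converges to a unique point, but this reformulation adds no real content and the direct argument above is shorter.
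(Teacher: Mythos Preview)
Your proof is correct and follows essentially the same approach as the paper: uniqueness from Hausdorff separation, and existence by contradiction using compactness to reduce to a finite cover whose associated index sets cannot all fail to be $\omega$-big. The only cosmetic difference is that you pass to complements via axiom (4) and intersect big sets, whereas the paper phrases the same step as ``a finite union of non-big sets is non-big''.
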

\begin{proof}
    The uniqueness follows from the fact that $X$ is Hausdorff. To show the existence, one argues by contradiction.  For the sake of contradiction, for every $x\in X$, there exists a neighbourhood $V_{x}$ of $x$ such that 
    \[N_{x}=\{n\in\N, \text{$x_{n}\in V_{x}$}\}\] is not $\omega$-big. The entire space $X$ is covered by a finite number of such $V_{x}$'s. Therefore, $\N$ is covered by a finite number of subsets, each of which is not big. Since the intersection of two big sets is big, it follows that the union of two sets, each of which is not $\omega$-big, is also not $\omega$-big. This leads to a contradiction.
\end{proof}

\begin{rmk}
    Note that if $\omega$ is the principal ultrafilter $\omega_{m}$, then $\lim_{\omega} x_{n} = x_{m}.$
Thus, our main interest lies in limits along non-principal ultrafilters.
\end{rmk}

\subsection{Scales}
\label{sec: scales}
We fix a non-principal ultrafilter $\omega$ and  introduce the notion of a scale to study the asymptotic behavior of  sequences.
\begin{defi}
A scale is an equivalence class of sequences in \((0,1]\), where two sequences \((\epsilon_n)_{n\in\N}\) and \((\eta_n)_{n\in\N}\) are considered equivalent if and only if \((\epsilon_n)_{n\in\N} \asymp_{\omega} (\eta_n)_{n\in\N}\), that is,  there exists a constant \( C > 1 \) such that \( \frac{1}{C} \epsilon_n \leq \eta_n \leq C \epsilon_n \)  \(\omega\)-almost surely. 
\end{defi}
The set of all scales, denoted by $\sE$, is a well-defined set and has the same cardinality \( 2^{\aleph_0}\) as $\R$. This follows from the fact that the set of sequences has cardinality $ 2^{\aleph_0}$, and that the sequences \( \phi_{\alpha} = (\frac{1}{n^{\alpha}})_{n\in\N} \) represent distinct scales for different values of \( \alpha \in (0, \infty) \).

A scale is said to be \emph{trivial} if it is represented by the constant sequence $(1)$. Observe that a sequence \(\epsilon_n\in(0,1]\) represents a trivial scale if and only if \(\lim_{\omega} \epsilon_n > 0\). We denote the trivial scale by $(1).$

 We define an order on scales by writing \(\epsilon \leq \eta\) iff \(\lim_{\omega} \frac{\epsilon_n}{\eta_n} < \infty\). Observe that $\epsilon<\eta$ if and only if $\lim_{\omega}\frac{\epsilon_n}{\eta_n}=0.$ With this ordering, the set of scales forms a totally ordered set, as the $\omega$-limit always exists in $[0,\infty]$. The trivial scale $(1)$ serves as the maximum element.

\begin{rmk}
The set of scales has no minimal element. Moreover, any decreasing sequence of scales admits a lower bound. 
However, any strictly decreasing (resp. strictly increasing) sequence admits no greatest lower bound (resp. least upper bound).

Let $\eta^{k} = (\eta_n^k)_{n\in\N}$, $k\in\N$, be a degenerating sequence of scales, and set $\eta_n = \min_{k=0}^{n}\eta_n^k$. 
Then the scale $\eta = (\eta_n)_{n\in\N}$ is a lower bound of the sequence $(\eta^k)_{k\in\N}$.

Assume now that $(\eta^k)_{k\in\N}$ is strictly decreasing, and let 
$\eta' = (\eta_n')_{n\in\N}$ be any lower bound of $(\eta^k)_{k\in\N}$. 
We construct a non-trivial scale $\epsilon = (\epsilon_n)_{n\in\N}$ such that 
$\frac{\eta'}{\epsilon}$ is also a lower bound of $(\eta^k)_{k\in\N}$. 
This will show that $(\eta^k)_{k\in\N}$ has no greatest lower bound.

By assumption, for each $k\in\N$, we have 
$\lim_{\omega} \frac{\eta_n'}{\eta_n^k} = 0.$
Hence there exists an $\omega$-big set $N_k \subseteq \N$ such that 
$\frac{\eta_n'}{\eta_n^k} < \frac{1}{k}$ for all $n \in N_k$.
Since $(\eta^k)_{k\in\N}$ is decreasing, we may assume that $(N_k)_{k\in\N}$ forms a degenerating sequence of subsets of $\N$. 
Define
\[
\epsilon_n = \frac{1}{\sqrt{k}} \quad \text{for } n \in N_k \setminus N_{k+1}.
\]
Then on $N_k$ we have 
$\frac{\eta_n'}{\epsilon_n \eta_n^k} < \frac{1}{\sqrt{k}},$
so that 
\[
\lim_{\omega} \frac{\eta_n'}{\epsilon_n \eta_n^k} = 0.
\]
Hence the claim.

The same argument applies, mutatis mutandis, to an increasing sequence.
  \end{rmk}

\begin{rmk}
For any two scales $\eta<\eta'$, we write $(\eta,\eta')\coloneqq\{\epsilon\mid\eta<\epsilon<\eta'\}.$ We endow the set of scales with the topology whose open sets are arbitrary unions of intervals of the form $(\eta,\eta')$. Under this topology: 
    \begin{enumerate}
        \item $\sE$ is Hausdorff since for any $\eta<\eta'$, one has $\eta<\sqrt{\eta\eta'}<\eta';$
        \item $\sE$ is not locally compact. 
In fact, for \(\eta < \eta' < 1\), the closed interval \([\eta, \eta'] \coloneqq \{ \epsilon \mid \eta \leq \epsilon \leq \eta' \}\) is not compact. To see this, consider the family of open intervals  
\[
I_\epsilon = \left( \frac{\epsilon}{\log \log \frac{\eta'}{\eta}},\ \min\left\{(1),\epsilon \log \log \frac{\eta'}{\eta} \right\}\right)
\]
for each \(\epsilon \in [\eta, \eta']\). This collection \(\{I_\epsilon\}\) forms an open cover of \([\eta, \eta']\). However, it admits no finite subcover, because each \(I_\epsilon\) contains at most one scale of the form  
$\eta_\alpha = \eta\left( \log \frac{\eta'}{\eta} \right)^\alpha$
for some \(\alpha > 0\).
\item \(\sE\) is not second countable, and hence not metrizable. Indeed, for each \(\alpha > 0\), the intervals  
\[
\left( \left(\frac{1}{n^{\alpha} \log n}\right)_{n\in\N},\ \left(\frac{\log n}{n^{\alpha}}\right)_{n\in\N} \right)
\]
are mutually disjoint. This yields an uncountable family of disjoint open sets, which implies that no countable basis exists.
    \end{enumerate}
\end{rmk}
\subsection{Complex Robinson fields}
\label{sec: complexrobinsonfield}
Fix a non-principal ultrafilter $\omega$.
Given a scale $\epsilon$, one can associate to it a complex Robinson field. This notion was first introduced by Robinson in the real setting in order to formulate non-standard analysis, see~\cite{roe2003lectures}. 

The construction of the field is as follows:
\[
\sH^{\epsilon}(\omega) = \left\{(z_n) \in \mathbb{C}^{\mathbb{N}} \mid \lim_{\omega} |z_n|^{\epsilon_n} < \infty \right\} \Big/ \left\{(z_n) \mid \lim_{\omega} |z_n|^{\epsilon_n} = 0\right\}.
\]
The norm \( |(z_n)| = \lim_{\omega} |z_n|^{\epsilon_n} \) endows \(\sH^{\epsilon}(\omega)\) with the structure of a complete valued field. 

The complex field \(\mathbb{C}\) can be naturally embedded as a subfield of \(\sH^{\epsilon}(\omega)\) via the map  sending each \( c \in \mathbb{C} \) to the equivalence class represented by the constant sequence \( (c) \).

\smallskip

We summarize some properties of $\sH^{\epsilon}(\omega)$ in the following proposition. For a proof, we refer the reader to~\cite[\S 3.2.2]{FG25}.
\begin{prop}
\label{prop: complexrobinsonfield}
The field $\sH^{\epsilon}(\omega)$ is algebraically closed and spherically complete. Moreover, it is non-Archimedean if and only if $\lim_{\omega} \epsilon_n = 0$.  
In particular, if $\epsilon = (1)$, then $\sH^{\epsilon}(\omega)=\mathbb{C}$.  
\end{prop}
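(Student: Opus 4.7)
The plan is to establish each of the four claims—algebraic closedness, spherical completeness, the non-Archimedean criterion, and the specialization to $\epsilon=(1)$—directly from the ultraproduct construction, with spherical completeness being the main technical obstacle.

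For algebraic closedness, given a monic polynomial $P(z) = z^d + \sum_{i<d} a_i z^i \in \sH^{\epsilon}(\omega)[z]$ with $a_i = [(a_{i,n})_n]$, I pick for each $n$ a root $\alpha_n \in \C$ of $P_n(z) = z^d + \sum_{i<d} a_{i,n} z^i$. The Cauchy bound $|\alpha_n| \leq 1 + \max_i |a_{i,n}|$ gives $|\alpha_n|^{\epsilon_n} \leq (1 + \max_i |a_{i,n}|)^{\epsilon_n}$, which is $\omega$-a.s.\ bounded since each $|a_{i,n}|^{\epsilon_n}$ is, so $\alpha := [(\alpha_n)_n] \in \sH^{\epsilon}(\omega)$; and $P(\alpha) = 0$ because evaluation commutes with $\omega$-limits.

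For the non-Archimedean criterion and the case $\epsilon=(1)$, the starting point is
\[
|x+y| \;=\; \lim_\omega |x_n+y_n|^{\epsilon_n} \;\leq\; \lim_\omega 2^{\epsilon_n}\max(|x_n|,|y_n|)^{\epsilon_n}.
\]
When $\lim_\omega \epsilon_n = 0$ one has $\lim_\omega 2^{\epsilon_n} = 1$, yielding the ultrametric inequality. Conversely, if $\lim_\omega \epsilon_n = \epsilon_\infty > 0$, then local compactness of $\C$ ensures every bounded sequence admits an $\omega$-limit in $\C$, and $[(z_n)] \mapsto \lim_\omega z_n$ defines an isometric field isomorphism $\sH^{\epsilon}(\omega) \cong (\C,|\cdot|^{\epsilon_\infty})$, which is Archimedean. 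Specializing to $\epsilon_n \equiv 1$ yields $\sH^{(1)}(\omega) = \C$.

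For spherical completeness, let $(B_k = \bar B(c_k, r_k))_{k\geq 1}$ be a decreasing chain of closed balls, with $c_k = [(c_{k,n})_n]$. The nesting $B_k \subseteq B_j$ for $j \leq k$ gives $|c_k - c_j| \leq r_j$, equivalent to the $\omega$-bigness of
\[
A_{k,j} \;:=\; \{n : |c_{k,n} - c_{j,n}|^{\epsilon_n} \leq r_j + 1/k\}.
\]
I form the nested $\omega$-big sets $M_k := \bigcap_{j\leq k} A_{k,j} \cap \{n \geq k\}$ and choose strictly decreasing $\omega$-big refinements $M_k' \subseteq M_k$ with $\bigcap_k M_k' = \emptyset$, possible because $\omega$ is non-principal. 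Defining $z_n := c_{k(n),n}$, where $k(n)$ is the unique index with $n \in M_{k(n)}' \setminus M_{k(n)+1}'$, one has on $M_j'$ both $k(n) \geq j$ and (via the inclusion $M_{k(n)} \subseteq A_{k(n),j}$) the pointwise bound $|z_n - c_{j,n}|^{\epsilon_n} \leq r_j + 1/k(n)$; since $k(n) \to \infty$ along $\omega$, taking the $\omega$-limit yields $|[z] - c_j| \leq r_j$ for every $j$, so $z \in \bigcap_k B_k$. The hard part is exactly this diagonalization: naïve telescoping of consecutive differences $c_{i+1} - c_i$ introduces factors $k(n)^{\epsilon_n}$ which need not stay bounded along $\omega$, so it is essential to encode the nesting $B_k \subseteq B_j$ directly through the sets $A_{k,j}$ rather than relying on the successive inclusions $B_{i+1} \subseteq B_i$.
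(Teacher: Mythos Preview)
Your argument is correct on all four points. The paper itself does not give a proof of this proposition; it simply cites \cite[\S3.2.2]{FG25}. Your direct verification from the ultraproduct construction is the standard route: roots of $P_n$ furnish roots of $P$ via the Cauchy bound, the factor $2^{\epsilon_n}$ governs the Archimedean/non-Archimedean dichotomy, and the diagonal choice $z_n=c_{k(n),n}$ along a decreasing family of $\omega$-big index sets produces a point in $\bigcap_k B_k$. Two small points worth tightening: first, the sets $M_k=\bigcap_{j\le k}A_{k,j}\cap\{n\ge k\}$ need not be nested in $k$, so your refinement $M_k'$ should be taken as $M_k'=\bigcap_{i\le k}M_i$ (then $M_k'\subseteq\{n\ge k\}$ automatically forces $\bigcap_k M_k'=\emptyset$); second, it is worth noting explicitly that $z=[(z_n)]$ lies in $\sH^{\epsilon}(\omega)$, which follows from the case $j=1$ of your bound together with $|z_n|^{\epsilon_n}\le 2^{\epsilon_n}\max(|c_{1,n}|^{\epsilon_n},|z_n-c_{1,n}|^{\epsilon_n})$. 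With these cosmetic fixes the proof is complete and self-contained, which is more than the paper itself provides.
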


There is a canonical map
\[(\mathbb{P}^1(\mathbb{C}))^{\mathbb{N}} \longrightarrow \mathbb{P}^1(\sH^{\epsilon}(\omega))\]
defined as follows: 

Given  $z_n = [z_{0,n} : z_{1,n}] \in \mathbb{P}^1(\mathbb{C})$, where $\max\{ |z_{0,n}|, |z_{1,n}| \} = 1$ for all $n\in\N$, we map the sequence $(z_n)_{n\in\N}$ to
\[z_{\omega}^{\epsilon} = [z_{0,\omega}^{\epsilon} : z_{1,\omega}^{\epsilon}],\]
where each $z_{i,\omega}^{\epsilon} \in \sH^{\epsilon}(\omega)$ is represented by the sequence $(z_{i,n})_{n \in \mathbb{N}}$.

Note that if \(\epsilon\) is the trivial scale, then \(z_\omega^\epsilon\) coincides with the \(\omega\)-limit of \((z_n)_{n\in\N}\) in \(\P^1(\C)\). To emphasize that this limit is taken in the complex projective line, we denote it by \(z_\omega^{\C}\).

\smallskip

Given a non-trivial scale $\epsilon$, one can compare $z_{\omega}^{\epsilon}$ and $z_{\omega}^\C.$

\begin{lemma}
\label{lem: redlimit}
 Let $\epsilon$ be a non-trivial scale, and  $(x_n)_{n\in\N},(y_n)_{n\in\N}\in\C^{\N}.$ If $\widetilde{x}_{\omega}^{\epsilon}=\widetilde{y}_{\omega}^{\epsilon}$, then we have $x_{\omega}^{\C}=y_{\omega}^{\C}$.  In particular, if $|x_\omega^{\epsilon}|<1$, then $x_{\omega}^{\C}=0.$
\end{lemma}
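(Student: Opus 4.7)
The plan is to translate the hypothesis $\widetilde{x}_\omega^{\epsilon}=\widetilde{y}_\omega^{\epsilon}$ into an inequality about the original sequences $(x_n), (y_n)$, then exploit the non-triviality of $\epsilon$ (which by Proposition~\ref{prop: complexrobinsonfield} amounts to $\lim_\omega \epsilon_n = 0$) to force $|x_n - y_n| \to 0$ along $\omega$; the conclusion in $\P^1(\C)$ is then immediate.

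First I would unpack the hypothesis. Saying the two reductions agree in the residue field of $\sH^{\epsilon}(\omega)$ means that $x_\omega^{\epsilon}, y_\omega^{\epsilon}$ lie in the valuation ring of $\sH^{\epsilon}(\omega)$ and that $|x_\omega^{\epsilon}-y_\omega^{\epsilon}|<1$. By the very definition of the norm on $\sH^{\epsilon}(\omega)$, this reads
\[
\lim_{\omega} |x_n - y_n|^{\epsilon_n} < 1.
\]

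The core step is to observe that, because $\epsilon_n\to 0$ along $\omega$, this inequality upgrades to $\lim_\omega |x_n - y_n| = 0$. Indeed, pick some $c$ strictly between $\lim_\omega |x_n - y_n|^{\epsilon_n}$ and $1$; then the set $A = \{n : |x_n - y_n|^{\epsilon_n} < c\}$ is $\omega$-big. For any target precision $\delta\in(0,1)$, the set $B_\delta = \{n : \epsilon_n < \log c / \log \delta\}$ is also $\omega$-big, and on $A\cap B_\delta$ one has $|x_n - y_n| < c^{1/\epsilon_n} < \delta$. Since $A\cap B_\delta$ is $\omega$-big for every $\delta$, we conclude $\lim_\omega |x_n-y_n|=0$.

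Finally, I would conclude by case analysis in $\P^1(\C)$. If $x_\omega^{\C} = a \in \C$, then $|x_n - a|\to 0$ along $\omega$, so the triangle inequality yields $|y_n - a|\to 0$, whence $y_\omega^{\C}=a$. If $x_\omega^{\C}=\infty$, then $|x_n|\to\infty$ along $\omega$, and $|y_n|\ge |x_n|-|x_n-y_n|\to\infty$, so $y_\omega^{\C}=\infty$. Either way $x_\omega^{\C}=y_\omega^{\C}$. The ``in particular'' assertion follows by applying the main statement with the constant sequence $y_n\equiv 0$: the hypothesis $|x_\omega^{\epsilon}|<1$ means $\widetilde{x}_\omega^{\epsilon}=0=\widetilde{y}_\omega^{\epsilon}$, so $x_\omega^{\C}=y_\omega^{\C}=0$.

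I do not anticipate any substantive obstacle; the only subtlety is phrasing the super-exponential passage from $|x_n-y_n|^{\epsilon_n}<c<1$ to $|x_n-y_n|<\delta$ correctly in the language of $\omega$-big sets rather than ordinary limits.
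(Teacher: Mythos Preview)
Your proof is correct and follows essentially the same route as the paper: both reduce the hypothesis to $\lim_\omega |z_n|^{\epsilon_n}<1$ for a suitable sequence $(z_n)$ and then use $\lim_\omega \epsilon_n=0$ to force $\lim_\omega |z_n|=0$. The only cosmetic difference is that the paper works in homogeneous coordinates (taking $z_n=x_{1,n}y_{2,n}-x_{2,n}y_{1,n}$), which absorbs your final case split for $\infty$ into a single line.
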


\begin{proof}
We express $x_{n}=[x_{1,n}\colon x_{2,n}]$ and $y_n=[y_{1,n}\colon y_{2,n}]$. We may suppose $\max\{|x_{1,n}|,|x_{2,n}|\}=1$ and $\max\{|y_{1,n}|,|y_{2,n}|\}=1$. By assumption, we have $\lim_{\omega}\epsilon_n\log|x_{1,n}y_{2,n}-x_{2,n}y_{1,n}|<0$. Since $\lim_{\omega}\epsilon_n=0$, we have $\lim_{\omega}\log|x_{1,n}y_{2,n}-x_{2,n}y_{1,n}|=-\infty$, which implies $x_{\omega}^{\C}=y_{\omega}^{\C}.$

If $|x_{\omega}^{\epsilon}|<1$, then $\widetilde{x}_{\omega}^{\epsilon}=0$. Thus, we obtain $x_{\omega}^{\C}=0.$
\end{proof}

 We recall the following result~\cite[Proposition 3.11]{FG25}, which states that a zero of a polynomial over \( \sH^{\epsilon}(\omega) \) can be lifted to zeros of a sequence of complex polynomials.
 If $\epsilon=1$, the lemma is a direct consequence of Rouché's theorem.
\begin{prop}
\label{prop: grouche}
    Let $P$ be a  polynomial with coefficients in $\sH^{\epsilon}(\omega)$ and $\alpha$ be a zero of $P$ with multiplicity $m$. Let $P_n$ be a sequence of complex polynomials of degree $d$  such that $P_{\omega}^{\epsilon}=P$. 
    
    Then there exist sequences $(\alpha_{i,n})_{n\in\N}\in\C^{\N}$, $i=1,\cdots,m$, which satisfy the following conditions: 
    \begin{enumerate}
        \item $\alpha_{i,\omega}^{\epsilon}=\alpha_i$;
        \item $\prod_{i=1}^{m}(z-\alpha_{i,n})$ is a factor of $P_n$ for all $n\in\N.$
    \end{enumerate}
\end{prop}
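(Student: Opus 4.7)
The plan is to apply the classical Rouché theorem in $\C$ to each $P_n$ on a small disk about a representative $(\alpha_n)$ of $\alpha$, with radii chosen to shrink at the scale $\epsilon$. If $\epsilon$ is trivial, Proposition~\ref{prop: complexrobinsonfield} gives $\sH^{\epsilon}(\omega)\cong\C$ and the statement is classical; I therefore assume $\lim_\omega\epsilon_n=0$. Fix a representative $(\alpha_n)_{n\in\N}$ of $\alpha$ and expand
\[
P_n(z)=\sum_{k=0}^{d} b_{k,n}(z-\alpha_n)^k,\qquad b_{k,n}:=\frac{P_n^{(k)}(\alpha_n)}{k!}.
\]
Since $b_{k,n}$ is a polynomial in the coefficients of $P_n$ and in $\alpha_n$, one has $(b_{k,n})_\omega^\epsilon=P^{(k)}(\alpha)/k!=:\tilde b_k$. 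The multiplicity hypothesis on $\alpha$ then reads $\lim_\omega|b_{k,n}|^{\epsilon_n}=0$ for $k<m$ and $\lim_\omega|b_{m,n}|^{\epsilon_n}=|\tilde b_m|>0$; in particular $b_{m,n}\neq 0$ for $\omega$-almost every $n$.

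Next I would introduce the Rouché window $(L_n,R_n)$ with
\[
L_n:=\max_{k<m}\Bigl(\tfrac{2d\,|b_{k,n}|}{|b_{m,n}|}\Bigr)^{1/(m-k)},\qquad R_n:=\min_{\substack{k>m\\ b_{k,n}\neq 0}}\Bigl(\tfrac{|b_{m,n}|}{2d\,|b_{k,n}|}\Bigr)^{1/(k-m)}
\]
(with $R_n=+\infty$ when the latter minimum is empty). The displayed limits yield $L_n^{\epsilon_n}\to 0$ while $R_n^{\epsilon_n}$ remains bounded below by a positive constant, so $L_n<R_n$ $\omega$-almost surely, and any $r_n\in(L_n,R_n)$—for instance the geometric mean—satisfies $r_n^{\epsilon_n}\to 0$. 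By construction, on the circle $|z-\alpha_n|=r_n$ we have $\sum_{k\neq m}|b_{k,n}|r_n^k<|b_{m,n}|r_n^m$, so the classical Rouché theorem applied to $P_n$ and $b_{m,n}(z-\alpha_n)^m$ shows that $P_n$ has exactly $m$ zeros, with multiplicity, inside $\bar B_\C(\alpha_n,r_n)$. Label them $\alpha_{1,n},\dots,\alpha_{m,n}$, and on the non-$\omega$-big set where the above setup fails, pick any $m$ roots of $P_n$ arbitrarily. Then $\prod_{i=1}^m(z-\alpha_{i,n})$ divides $P_n$ for all $n$, while $|\alpha_{i,n}-\alpha_n|\le r_n$ together with $r_n^{\epsilon_n}\to 0$ gives $(\alpha_{i,n})_\omega^\epsilon=\alpha$.

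The only non-routine point is the asymptotic analysis of the window $(L_n,R_n)$; once $b_{k,n}$ is identified with the Taylor coefficient of $P$ at $\alpha$, this reduces to elementary ultrafilter manipulations of the $|b_{k,n}|^{\epsilon_n}$.
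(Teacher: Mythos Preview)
The paper does not actually prove this proposition here; it is recalled from \cite[Proposition~3.11]{FG25}, with only the remark that for the trivial scale it is a direct consequence of Rouch\'e's theorem. Your argument supplies precisely the Rouch\'e-based proof the paper gestures at, and carries it through for a general scale, so the approach is the expected one and the overall structure is sound.

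One imprecision worth flagging: you assert that ``any $r_n\in(L_n,R_n)$---for instance the geometric mean---satisfies $r_n^{\epsilon_n}\to 0$''. Neither claim is literally correct. Nothing forces $R_n^{\epsilon_n}$ to stay bounded above (this fails, e.g., when $\tilde b_k=0$ for all $k>m$, in which case $R_n^{\epsilon_n}\to\infty$), so $r_n$ near $R_n$, and even $\sqrt{L_nR_n}$, need not satisfy $r_n^{\epsilon_n}\to 0$. The repair is immediate: since $L_n^{\epsilon_n}\to 0$ one has $L_n<1$ $\omega$-a.s., so take $r_n:=L_n^{1/2}$. Then $r_n>L_n$, $r_n^{\epsilon_n}=(L_n^{\epsilon_n})^{1/2}\to 0$, and as this is eventually below the positive lower bound you established for $R_n^{\epsilon_n}$, also $r_n<R_n$. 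With this adjustment the argument goes through.
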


\subsection{Scales and higher rank valuations} 
\label{sec: higherrank}
We say that a subring \( R \subsetneq K \) is a \emph{valuation ring} of a field \( K \) if for every nonzero element \( x \in K^* \), either \( x \in R \) or \( x^{-1} \in R \). This condition is equivalent to the existence of a function $v\colon R\to\Gamma$ onto a totally ordered abelian group that satisfies the axioms of valuations,  see, e.g.,~\cite{MV00}, and such that  
$R = \{ x \in K^* \mid v(x) \geq 0 \} \cup \{0\}.$

\smallskip

Fix a non-trivial scale \(\epsilon\), i.e., a sequence \((\epsilon_n)_{n\in\N}\) satisfying \(\lim_{\omega} \epsilon_n = 0\). We investigate the valuation rings induced by  scales \(\eta\) with \(\eta \geq \epsilon\).

\begin{lemma}
\label{lem: comparescale}  
Let \(\eta\) be a scale with \(\epsilon < \eta\), and $(z_n)_{n\in\N}\in\C^{\N}$. Then 
\(\lim_{\omega} |z_n|^{\eta_n} < \infty\) implies \(\lim_{\omega} |z_n|^{\epsilon_n} \leq 1\).
\end{lemma}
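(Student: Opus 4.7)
The plan is to take logarithms and reduce the claim to the defining inequality $\lim_\omega \epsilon_n/\eta_n = 0$ characterising $\epsilon < \eta$ (as recalled in \S\ref{sec: scales}). Set $M := \lim_\omega |z_n|^{\eta_n} \in [0,\infty)$. For any fixed $\delta > 0$, the definition of the $\omega$-limit gives that $|z_n|^{\eta_n} \leq M+\delta$ holds $\omega$-almost surely, equivalently
$$\eta_n \log|z_n| \leq \log(M+\delta) \quad \omega\text{-a.s.}$$

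I would then split into two subcases along an $\omega$-big partition. If $|z_n| \leq 1$ holds on an $\omega$-big set, then $|z_n|^{\epsilon_n} \leq 1$ there (since $\epsilon_n > 0$) and the conclusion is immediate. Otherwise, by the ultrafilter axioms we may work on the $\omega$-big set where $|z_n| > 1$, so that $\log |z_n| > 0$ and dividing through by $\eta_n > 0$ yields $\log|z_n| \leq \eta_n^{-1}\log(M+\delta)$. Multiplying by $\epsilon_n > 0$ gives
$$\epsilon_n \log|z_n| \;\leq\; \frac{\epsilon_n}{\eta_n}\,\log(M+\delta).$$
Passing to $\omega$-limits and using $\lim_\omega \epsilon_n/\eta_n = 0$, the right-hand side tends to $0$. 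Hence $\lim_\omega \epsilon_n \log|z_n| \leq 0$, which is precisely $\lim_\omega |z_n|^{\epsilon_n} \leq 1$.

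The argument is entirely formal and no genuine obstacle is expected. The only mild subtlety is that one cannot divide the inequality $\eta_n \log|z_n| \leq \log(M+\delta)$ by $\log|z_n|$ without first knowing its sign, which is why the preliminary case split along $\{|z_n|\leq 1\}$ versus $\{|z_n|>1\}$ is needed.
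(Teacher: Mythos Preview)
Your proof is correct and follows essentially the same approach as the paper: rewrite $\epsilon_n\log|z_n|=\tfrac{\epsilon_n}{\eta_n}\cdot\eta_n\log|z_n|$ and use $\lim_\omega \epsilon_n/\eta_n=0$ together with the boundedness of $\eta_n\log|z_n|$ from above. The paper compresses this into a one-line computation and does not make the case split on $\{|z_n|\le 1\}$ versus $\{|z_n|>1\}$ explicit, but your added care there is harmless and the arguments are the same in substance.
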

\begin{proof}
    If \(\lim_{\omega} |z_n|^{\eta_n} < \infty\), then $\lim_{\omega}\epsilon_n\log|z_n|=\lim_{\omega}\frac{\epsilon_n}{\eta_n}\log|z_n|^{\eta_n }\leq 0.$ Thus  \(\lim_{\omega} |z_n|^{\epsilon_n} \leq 1\). 
\end{proof}

The lemma above implies that one can associate to each scale $\eta\geq \epsilon$ the following subsets of \(\sH^{\epsilon}(\omega)\): 
\[
R_{+}^{\eta} = \left\{(z_n)_{n\in\N} \in \mathbb{C}^{\mathbb{N}} \mid \lim_{\omega} |z_n|^{\eta_n} <\infty\right\} \Big/ \left\{(z_n) \mid \lim_{\omega} |z_n|^{\epsilon_n} = 0\right\}.
\]
and 
\[
R_{-}^{\eta} = \left\{(z_n)_{n\in\N} \in \mathbb{C}^{\mathbb{N}} \mid \lim_{\omega} |z_n|^{\eta_n} \leq 1 \right\} \Big/ \left\{(z_n) \mid \lim_{\omega} |z_n|^{\epsilon_n} = 0\right\},
\]

Note that \( (e^{\frac{1}{\eta_n}})_{n\in\N}\in R_{+}^{\eta} \setminus R_{-}^{\eta} \), hence $R_{-}^{\eta}\subsetneq R_{+}^{\eta}\subseteq \sH^{\epsilon}(\omega)$. Observe that by construction, \( R^{\eta} \) contains \( \mathbb{C} \). Also, when $\eta=\epsilon$,  we have $R_{+}^{\eta}=\sH^{\epsilon}(\omega)$ and  $R_{-}^{\eta}=(\sH^{\epsilon}(\omega))^{\circ}$.

\begin{lemma}
For any $\epsilon<\eta<(1)$, $R_{\pm}^{\eta}$ is a valuation ring of $\sH^{\epsilon}(\omega)$.
\end{lemma}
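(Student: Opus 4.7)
The plan is to verify the defining conditions of a valuation ring of $\sH^{\epsilon}(\omega)$: $R_{\pm}^{\eta}$ must be a well-defined proper subring, and for every $x \in \sH^{\epsilon}(\omega)^{*}$ at least one of $x, x^{-1}$ must belong to it. The efficient packaging is the function
\[
v_{\eta}(x) := -\lim_{\omega} \eta_n \log |z_n| \in [-\infty,+\infty],
\]
for $x = [(z_n)_{n\in\N}] \in \sH^{\epsilon}(\omega)$, whose existence follows from compactness of $[-\infty,+\infty]$ under $\omega$-limits. Tautologically $R_{+}^{\eta} = \{v_{\eta} > -\infty\}$ and $R_{-}^{\eta} = \{v_{\eta} \geq 0\}$, and the identity $v_{\eta}(x^{-1}) = -v_{\eta}(x)$ delivers the valuation-ring axiom: one of the two values is always $\geq 0$ and one is always $> -\infty$.

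I would first check that $v_{\eta}$ descends to $\sH^{\epsilon}(\omega)$. If $(z_n)$ and $(z_n')$ represent the same class, then $\lim_{\omega}\epsilon_n \log|z_n - z_n'| = -\infty$; the hypothesis $\epsilon < \eta$, equivalent to $\lim_{\omega}\eta_n/\epsilon_n = +\infty$, amplifies this to $\lim_{\omega}\eta_n\log|z_n - z_n'| = -\infty$, so the triangle inequality gives $v_\eta(x)=v_\eta(x')$. Next, multiplicativity $v_{\eta}(xy) = v_{\eta}(x) + v_{\eta}(y)$ is immediate; for sums, the elementary inequality
\[
|z_n + w_n|^{\eta_n} \leq 2^{\eta_n}\,\max(|z_n|, |w_n|)^{\eta_n}
\]
combined with $\lim_{\omega} \eta_n = 0$ (which holds because $\eta < (1)$) yields $v_{\eta}(x+y) \geq \min(v_{\eta}(x), v_{\eta}(y))$, and hence additive closure for both $R_{+}^{\eta}$ and $R_{-}^{\eta}$. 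Properness of $R_{-}^{\eta} \subsetneq R_{+}^{\eta}$ is already recorded via the class of $(e^{1/\eta_n})_{n\in\N}$; for $R_{+}^{\eta} \subsetneq \sH^{\epsilon}(\omega)$, I would take the intermediate scale $\xi = (\sqrt{\epsilon_n \eta_n})_{n\in\N}$, check $\epsilon < \xi < \eta$ directly from $\lim_{\omega}\epsilon_n/\eta_n = 0$, and use $x = [(e^{1/\xi_n})_{n\in\N}]$, whose norm $|x| = \lim_{\omega} e^{\epsilon_n/\xi_n} = 1$ places it in $\sH^{\epsilon}(\omega)$ while $v_{\eta}(x) = -\lim_{\omega}\eta_n/\xi_n = -\infty$.

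The step that requires the most attention is the additive-closure argument, since this is the only place the hypothesis $\eta < (1)$ intervenes: the factor $2^{\eta_n}$ tends to $1$ precisely because $\eta$ is non-trivial, which is exactly what makes the ultrametric-type estimate land back in $R_{\pm}^{\eta}$. All remaining steps are formal consequences of the definition of $\sH^{\epsilon}(\omega)$ and routine manipulations of $\omega$-limits.
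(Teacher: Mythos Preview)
Your proof is correct and takes essentially the same route as the paper's: the additive closure step rests on the identical estimate $|z_n\pm w_n|^{\eta_n}\le 2^{\eta_n}\max(|z_n|,|w_n|)^{\eta_n}$ together with $\lim_\omega 2^{\eta_n}=1$, and the valuation-ring axiom comes from the same inversion argument, merely repackaged through the function $v_\eta$. One genuine addition on your side is the explicit verification that $R_+^{\eta}\subsetneq \sH^{\epsilon}(\omega)$ via the intermediate scale $\xi=(\sqrt{\epsilon_n\eta_n})_{n\in\N}$, a properness check the paper's own proof leaves unstated even though its definition of ``valuation ring'' requires it.
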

\begin{rmk}
If \( \eta = (1) \), then \( R_{+}^{\eta} \)  is the set of all bounded sequences along $\omega$, which remains a valuation ring, whereas \( R_{-}^{\eta} \) is not a ring.
\end{rmk}
\begin{rmk}
    The maximal ideal of $R_+^{\eta}$ is the set 
    \[\mathfrak{m}_+^\eta=\{(z_n)_{n\in\N}\in\C^{\N}|\lim_{\omega}|z_n|^{\eta_n}=0\}/\{(z_n)_{n\in\N}\in\C^{\N}|\lim_{\omega}|z_n|^{\epsilon_n}=0\};\]
and the maximal ideal of $R_-^{\eta}$ is the set 
 \[\mathfrak{m}_-^\eta=\{(z_n)_{n\in\N}\in\C^{\N}|\lim_{\omega}|z_n|^{\eta_n}<1\}/\{(z_n)_{n\in\N}\in\C^{\N}|\lim_{\omega}|z_n|^{\epsilon_n}=0\}.\]
\end{rmk}
\begin{proof}
    By definition, \( R_{\pm}^{\eta} \) contains the  multiplicative identity $(1)$ and is closed under multiplication. To establish that \( R_{\pm}^{\eta} \) is a ring, it remains to show that it is closed under subtraction.

    Let \( (z_n)_{n\in\N}, (w_n)_{n\in\N} \) represent two elements in \( R_{\pm}^{\eta} \). Since  \[|z_n - w_n| \leq 2\max\{|z_n|, |w_n|\},\] we obtain  
\[
\lim_{\omega} |z_n - w_n|^{\eta_n} \leq \max\{\lim_{\omega} |z_n|^{\eta_n},\lim_{\omega}  |w_n|^{\eta_n} \}<\infty.
\]  
Thus, \( R_{\pm}^{\eta} \) is closed under subtraction, completing the proof that it is a ring.

 Let \( z \in \sH^{\epsilon}(\omega), \) and choose a representative \( (z_n)_{n\in\N} \in \mathbb{C}^{\mathbb{N}} \). If  $\lim_{\omega} |z_n|^{\eta_n} <\infty,$
then \( z \in R_{+}^{\eta} \). On the other hand, if  
$\lim_{\omega} |z_n|^{\eta_n}=\infty,$ then  
$\lim_{\omega} \left| \frac{1}{z_n} \right|^{\eta_n} =0,$ which implies \( z^{-1} \in R_{+}^{\eta} \). Hence, \( R_{+}^{\eta} \) is a valuation ring. Similarly, one can show $R_{-}^{\eta}$ is also a valuation ring.
\end{proof}

\smallskip

We explore how $R_\pm^{\eta}$ varies depending on $\eta$.
We define a partial order on the set \( S = [\epsilon, (1)) \times \{+, -\} \) as follows:  
\[
(\eta, -) < (\eta, +), \quad \text{and} \quad (\eta', +) < (\eta, -) \quad \text{for all } \eta' > \eta.
\]

For any $\sigma=(\eta,*)$ with $*\in\{+,-\}$, we associate a ring $R(\sigma)=R_*^{\eta}.$
\begin{prop}
\label{prop: limitring}
 The map $(\eta,*)\mapsto R_{*}^{\eta}$ is strictly increasing with respect to the above partial order on \(S\). 

Moreover, if $\sigma=(\eta,*)$, then the following set equalities hold:  
\begin{enumerate}
    \item $\bigcup_{\sigma' < \sigma} R(\sigma')= R_{-}^{\eta}$;
    \item $\bigcap_{\sigma' > \sigma} R(\sigma') = R_{+}^{\eta}.$
\end{enumerate}
\end{prop}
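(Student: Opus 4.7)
My plan is to reduce the proposition to a pair of elementary strict inclusions combined with two diagonal constructions.

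First, I would establish $R_-^\eta \subsetneq R_+^\eta$ for every $\eta \in [\epsilon,(1))$, and $R_+^{\eta'} \subsetneq R_-^\eta$ whenever $\eta < \eta'$. The former is tautological, with strictness witnessed by $(e^{1/\eta_n})_{n\in\N}$. For the latter, if $\lim_\omega \eta_n'\log|z_n| < \infty$ then multiplying by $\eta_n/\eta_n' \to 0$ gives $\lim_\omega \eta_n\log|z_n| \leq 0$, exactly as in Lemma~\ref{lem: comparescale}; the sequence $(\exp(1/\sqrt{\eta_n\eta_n'}))_{n\in\N}$ lies in $R_-^\eta \setminus R_+^{\eta'}$ since $\eta_n\log|z_n|=\sqrt{\eta_n/\eta_n'}\to 0$ while $\eta_n'\log|z_n|=\sqrt{\eta_n'/\eta_n}\to\infty$. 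Combining these two inclusions with transitivity yields strict monotonicity of $\sigma\mapsto R(\sigma)$ on $S$, together with the ``easy'' halves $\bigcup_{\sigma'<\sigma} R(\sigma') \subseteq R_-^\eta$ and $R_+^\eta \subseteq \bigcap_{\sigma'>\sigma} R(\sigma')$ of the two limit formulas.

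What remains are two non-trivial statements: (a) every $(z_n)\in R_-^\eta$ lies in $R_+^{\eta'}$ for some $\eta' > \eta$ in $S$; and (b) if $(z_n)\in R_-^{\eta'}$ for every $\eta' \in [\epsilon,\eta)$, then $(z_n)\in R_+^\eta$. For (a), I would write $b_n = \eta_n\log|z_n|$, so $\lim_\omega b_n \leq 0$. For each integer $k\geq 1$, the set $N_k = \{n:\, b_n\leq 1/k^2 \text{ and } \eta_n\leq 1/k^2\}$ is $\omega$-big, and after replacing $N_k$ with $\bigcap_{j\leq k}N_j$ I may assume $N_k\supseteq N_{k+1}$. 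Setting $r_n = k$ on $N_k\setminus N_{k+1}$ and $r_n = 1$ off $N_1$, and taking $\eta_n' = r_n\eta_n$, one has $\eta_n' \leq 1/k \leq 1$ on $N_k$ and $\eta_n'\log|z_n| = r_n b_n \leq 1/k$ on $N_k\setminus N_{k+1}$, while $\eta_n/\eta_n' = 1/r_n$ has $\omega$-limit $0$. Hence $\eta' \in S$, $\eta' > \eta$, and $(z_n) \in R_-^{\eta'} \subseteq R_+^{\eta'}$.

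For (b), I would argue by contradiction. If $\lim_\omega \eta_n\log|z_n| = +\infty$, set $M_n = \max(1,\eta_n\log|z_n|)$ and $\eta_n' = \max(\epsilon_n,\eta_n/\sqrt{M_n})$. Then $\epsilon \leq \eta' < \eta$ in $S$, and a short case analysis splitting according to whether $\eta_n/\sqrt{M_n} \geq \epsilon_n$ or not shows $\eta_n'\log|z_n| \to +\infty$ along $\omega$, contradicting $(z_n) \in R_-^{\eta'}$. The main obstacle throughout the proof is the diagonal construction in (a): one must simultaneously force $\eta_n' \in (0,1]$, make $\eta'$ strictly dominate $\eta$ as a scale, and prevent the possibly large positive excursions of $b_n$ from spoiling $\lim_\omega \eta_n'\log|z_n| \leq 0$, starting from a sequence whose $\omega$-limit may be only $0$. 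Once this construction is in place, every remaining ingredient of the proposition follows formally from the two elementary inclusions above.
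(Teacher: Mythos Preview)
Your proposal is correct and follows the same overall architecture as the paper: establish strict monotonicity via the witnesses $(e^{1/\eta_n})$ and $(e^{1/\sqrt{\eta_n\eta_n'}})$, reduce the two set equalities to the non-trivial inclusions $R_-^\eta \subseteq \bigcup_{\eta'>\eta} R_+^{\eta'}$ and $\bigcap_{\eta'<\eta} R_-^{\eta'} \subseteq R_+^\eta$, and prove each by producing an intermediate scale. For (b) your construction $\eta_n' = \max(\epsilon_n,\eta_n/\sqrt{M_n})$ coincides on an $\omega$-big set with the paper's $\max\{\sqrt{\eta_n/\log|z_n|},\epsilon_n\}$.

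The only substantive difference is in (a). The paper avoids your layered diagonal construction by a simple dichotomy: if $(z_n)$ is $\omega$-a.s.\ bounded then already $(z_n)\in R_-^{\sqrt{\eta}}$; otherwise $\lim_\omega|z_n|=\infty$ forces $\lim_\omega\eta_n\log|z_n|=0$ (the positivity of $\log|z_n|$ upgrades $\le 0$ to $=0$), and then the single formula $\eta_n'=\sqrt{\eta_n/\log|z_n|}$ works directly, since $\eta_n'\log|z_n|=\sqrt{\eta_n\log|z_n|}\to 0$ and $\eta_n'/\eta_n=1/\sqrt{\eta_n\log|z_n|}\to\infty$. This is shorter and sidesteps the bookkeeping of nested $\omega$-big sets; your diagonal argument, on the other hand, is more uniform (no case split) and makes explicit the mechanism you flag as the main obstacle. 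Both routes are valid.
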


\begin{proof}
Lemma~\ref{lem: comparescale} directly implies for any $\eta<\eta'$, one has \( R_{+}^{\eta'} \subseteq R_{-}^{\eta} \). To show that the inclusion is strict, consider  $\left(e^{\frac{1}{\sqrt{\eta_n\eta_n'}}}\right)_{n\in\N} \in R_{-}^{\eta} \setminus R_{+}^{\eta'}.$ Also note that  $R_{-}^{\eta}\subsetneq R_{+}^{\eta}. $ Therefore, the map $(\eta,*)\mapsto R_{*}^{\eta}$ is strictly increasing.

We now prove the set equalities. First, observe the natural inclusions:  
$\bigcup_{\eta < \eta'} R_{-}^{\eta'} \subseteq \bigcup_{\eta < \eta'} R_{+}^{\eta'} \subseteq R_{-}^{\eta}$ and  $R_{+}^{\eta} \subseteq \bigcap_{\eta > \eta'} R_{-}^{\eta'} \subseteq \bigcap_{\eta > \eta'} R_{+}^{\eta'}.$ Thus, it suffices to show  $R_{-}^{\eta} \subseteq \bigcup_{\eta < \eta'} R_{-}^{\eta'}$ and  $ \bigcap_{\eta > \eta'} R_{+}^{\eta'}\subseteq R_{+}^{\eta} .$

For the first inclusion, assume \(\lim_{\omega} |z_n|^{\eta_n} \leq 1\). If \((z_n)_{n\in\N}\) is almost surely bounded, then we have \(z_{\omega}^{\eta} \in  R_{-}^{\sqrt{\eta}}\subseteq \bigcup_{\eta < \eta'} R_{-}^{\eta'}\). If \(\lim_{\omega} |z_n| = \infty\), then $\lim_{\omega}|z_n|^{\eta_n}=1$.  Define  
$\eta_n' = \sqrt{\frac{\eta_n}{\log |z_n|}}.$
Note that \(\eta' > \eta\) and \(\lim_{\omega} |z_n|^{\eta_n'} = 1\), it follows that \(R_{-}^{\eta} \subseteq \bigcup_{\eta < \eta'} R_{-}^{\eta'}\).

For the second inclusion, assume \(\lim_{\omega} |z_n|^{\eta_n} = \infty\). We define  \[\eta'_n = \max \left\{ \sqrt{\frac{\eta_n}{\log |z_n|}}, \epsilon_n \right\}.\] Since \(\epsilon \leq \eta' < \eta\) and \(\lim_{\omega} |z_n|^{\eta_n'} = \infty\), we conclude $(z_n)_{n\in\N}\notin \bigcap_{\eta > \eta'} R_{+}^{\eta'}$. Thus, $  \bigcap_{\eta > \eta'} R_{+}^{\eta'}\subseteq R_{+}^{\eta}.$   
\end{proof}

\subsection{Finiteness of scales on fields of finite transcendence degree}

The following result is a key ingredient in the proof of Theorem~\ref{thm: intro2d-2}.

 \begin{thm}
 \label{thm: numbervaluationring}
Let \( K \) be a field with \( \C \subseteq K \subseteq \sH^{\epsilon}(\omega) \). Suppose that \( K \) has finite transcendence degree \( d \) over $\C$. Then there exist \( l \leq d \) distinct scales  
$\epsilon \leq \eta^1 < \eta^2 < \dots < \eta^l < 1$ such that
\[
R_-^{\eta^i} \cap K \subsetneq R_+^{\eta^i} \cap K \quad \text{for each } i = 1, \dots, l.
\]
Moreover, for any \( \eta \) satisfying \( \eta^i < \eta < \eta^{i+1} \), one has  
\[
R_+^{\eta} \cap K = R_-^{\eta} \cap K = R_+^{\eta^{i+1}} \cap K = R_-^{\eta^i} \cap K.
\]
\end{thm}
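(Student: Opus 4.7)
The plan is to view the assignment $\sigma\mapsto R(\sigma)\cap K$ from the ordered set $S=[\epsilon,(1))\times\{+,-\}$ (with the partial order from \S\ref{sec: higherrank}) to the set of valuation rings of $K$, and to apply Abhyankar's inequality to bound the number of its strict jumps by $\mathrm{tr.deg}(K/\C)=d$. I would first verify that each $R(\sigma)\cap K$ is itself a valuation ring of $K$ containing $\C$: given $x\in K^*$, either $x$ or $x^{-1}$ lies in $R(\sigma)$, so the same holds after intersecting with $K$, and $\C\subseteq R(\sigma)$ follows from $\lim_{\omega}|c|^{\eta_n}=1$ for any nonzero $c\in\C$ and any non-trivial scale $\eta$. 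Since intersection with $K$ commutes with arbitrary unions and intersections, Proposition~\ref{prop: limitring} yields, for $\sigma=(\eta,*)$,
\[
\bigcup_{\sigma'<\sigma}\bigl(R(\sigma')\cap K\bigr)=R_-^{\eta}\cap K,\qquad \bigcap_{\sigma'>\sigma}\bigl(R(\sigma')\cap K\bigr)=R_+^{\eta}\cap K.
\]

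Next, I would call a scale $\eta\in[\epsilon,(1))$ a \emph{jump scale} if $R_-^{\eta}\cap K\subsetneq R_+^{\eta}\cap K$, and enumerate them as $\eta^1<\eta^2<\cdots$. For any finite initial segment $\eta^1<\cdots<\eta^l$, monotonicity of $\sigma\mapsto R(\sigma)\cap K$ (combined with the fact that the rings shrink as $\eta$ increases) produces the chain
\[
R_-^{\eta^l}\cap K \;\subsetneq\; R_+^{\eta^l}\cap K \;\subseteq\; R_-^{\eta^{l-1}}\cap K \;\subsetneq\; \cdots \;\subseteq\; R_-^{\eta^1}\cap K \;\subsetneq\; R_+^{\eta^1}\cap K
\]
of valuation rings of $K$ containing at least $l$ strict inclusions. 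Because the overrings of a valuation ring $R_0\subseteq K$ with fraction field $K$ are in order-preserving bijection with the prime ideals of $R_0$, this chain forces the Krull rank of $R_0:=R_-^{\eta^l}\cap K$ to be at least $l$. Abhyankar's inequality applied to $\C\subseteq R_0\subseteq K$ bounds this rank by $\mathrm{tr.deg}(K/\C)=d$, giving $l\le d$; in particular, only finitely many jump scales can occur.

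Finally, to prove the constancy statement on an interval $(\eta^i,\eta^{i+1})$, observe that by the definition of jump scales the value $A:=R_-^{\eta}\cap K=R_+^{\eta}\cap K$ is independent of $\eta\in(\eta^i,\eta^{i+1})$, while monotonicity sandwiches it as $R_+^{\eta^{i+1}}\cap K\subseteq A\subseteq R_-^{\eta^i}\cap K$. Applying the union formula with $\sigma=(\eta^i,-)$ expresses $R_-^{\eta^i}\cap K$ as the union of $R_*^{\eta'}\cap K$ over $\eta'>\eta^i$: the contributions from $\eta'\in(\eta^i,\eta^{i+1})$ equal $A$, and all others are contained in them by monotonicity, forcing $R_-^{\eta^i}\cap K=A$. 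Symmetrically, the intersection formula at $\sigma=(\eta^{i+1},+)$ yields $R_+^{\eta^{i+1}}\cap K=A$, so the full chain of equalities holds. The main obstacle in the whole argument is precisely this last step: one must carefully track the (reverse) orientation of the partial order on $S$ relative to set-theoretic inclusion in order to apply the union and intersection formulas correctly across an open interval of scales, not merely at a single $\sigma$.
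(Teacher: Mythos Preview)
Your proposal is correct and follows essentially the same route as the paper: both intersect the monotone chain $\sigma\mapsto R(\sigma)$ with $K$, invoke Abhyankar's inequality to bound the length of the resulting chain of valuation rings by $\mathrm{tr.deg}(K/\C)=d$, and then use the union/intersection identities of Proposition~\ref{prop: limitring} to pin down the values on the open intervals between jump scales. Your write-up is in fact more explicit than the paper's in verifying that each $R(\sigma)\cap K$ is a valuation ring of $K$ and in phrasing the rank bound via the bijection between overrings and primes; the one step both arguments leave implicit is why ``no jump in $(\eta^i,\eta^{i+1})$'' forces the common value $R_-^\eta\cap K=R_+^\eta\cap K$ to be \emph{constant} there (this does not follow from the definition of jump scales alone, but from finiteness of the image together with the continuity formulas, or equivalently by associating to any $x\in K$ its own scale $\eta_x$).
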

 \begin{proof}
Observe that for each $\eta$, the ring $R_*^{\eta}\cap K$ is either equal to $K$, or a proper valuation subring of $K$.  By Proposition~\ref{prop: limitring}, the map $(\eta,*)\mapsto R^{\eta}_*\cap K$ defines a chain of valuation subrings of \(K\). Moreover, all these valuation rings contain \(\mathbb{C}\), and all corresponding valuations are trivial on \(\mathbb{C}\).

 The number of distinct valuation rings in this chain is therefore bounded above by the rank of the finest valuation ring appearing in the image of the map. Since the rank of a valuation ring is bounded by the transcendence degree of the field extension \(K/\mathbb{C}\), we conclude that the image of the map \(\sigma \mapsto R_*^{\eta} \cap K\) with $R_*^{\eta}\cap K\not=K$  has cardinality at most $d$. (See \cite[\S5]{MV00}). 

Therefore, there exist scales $\epsilon\leq\eta^1 < \cdots < \eta^{l}<(1)$ such that the jump $R_-^\eta\cap K\not=R_+^{\eta}\cap K$  occurs if and only if $\eta=\eta^i$ for some $1\leq i\leq l.$ 
Then $R(\eta^i, -)$ defines distinct valuation rings, so we must have $l \leq d$. By construction, for all $\eta$ with $\eta^i < \eta < \eta^{i+1}$, the ring $R_{-}^{\eta} \cap K = R_+^{\eta} \cap K$ is constant. Therefore, this common ring is given by  
\[
\bigcup_{\eta > \eta^i} R_{-}^{\eta} = \bigcap_{\eta < \eta^{i+1}} R_{-}^{\eta}.
\]  
Applying Proposition~\ref{prop: limitring}, we conclude that  
$R_{-}^{\eta^i} = R_{+}^{\eta^{i+1}},$ 
which completes the proof.
\end{proof}

\section{Proof of the main results}

\subsection{Construction of the non-Archimedean limit map}
\label{sec: constructionNA}
Fix a sequence of rational maps $(f_n)_{n\in\N}\in\Rat_{d}(\C)^{\N}$, and a non-principal ultrafilter $\omega$. We can define a limit map of $(f_n)_{n\in\N}$ along $\omega$, which is a rational map over a non-Archimedean complete valued field. For further details, we refer the reader to~\cite{FG25}.

\smallskip

Write $f_n=[P_n\colon Q_n]$ with $P_n(z_0,z_1)=\sum_{i=0}^d a_{i,n}z_0^i z_1^{d-i}$ and $Q_n(z_0,z_1)=\sum_{i=0}^d b_{i,n}z_0^i z_1^{d-i}$, where $\max_{i,j}\max\{|a_{i,n}|,|b_{j,n}|\}=1.$

Recall that the homogeneous resultant of $f_n$ is defined as $|\Res(f_n)|=|\Res(P_n,Q_n)|.$ 
Let \[
\epsilon(f_n)= \left(-\log\frac{|\Res(f_n)|}{C_d}\right)^{-1},\] where
$C_d = e\sup_{\Rat_d(\C)}|\Res(f)|.
$
 By definition, we have $0<\epsilon(f_n)<1.$

Note that the sequences $(a_{i,n})_{n\in\N}$ and $(b_{j,n})_{n\in\N}$ define elements $a_{i,\omega}^{\epsilon(f)}$ and $b_{j,\omega}^{\epsilon(f)}$ in the complex Robinson field $\sH^{\epsilon(f)}(\omega)$ introduced in \S\ref{sec: complexrobinsonfield}. This yields two polynomials $P_\omega^{\epsilon(f)}$ and $Q_\omega^{\epsilon(f)}$ with coefficients in $\sH^{\epsilon(f)}(\omega)$.

Observe that 
\begin{align*}
\label{eqq: resultant} |\Res(P_\omega^{\epsilon(f)},Q_\omega^{\epsilon(f)})|
=\lim_{\omega}|\Res(P_n,Q_n)|^{\epsilon(f_n)}
= e^{-1}>0.
\end{align*}

In particular, $P_\omega^{\epsilon(f)}$ and $Q_\omega^{\epsilon(f)}$ have no common factor, so we obtain a rational map
$$
f_\omega^{\epsilon(f)}=[P_\omega^{\epsilon(f)} : Q_\omega^{\epsilon(f)}]
$$
of degree $d$ over $\sH^{\epsilon(f)}(\omega)$.

\smallskip

Favre and Gong~\cite[Corollary 6.9]{FG25} established the following asymptotic behavior of Lyapunov exponents.

\begin{thm}
\label{thm: asymtlyapunov}
  The Lyapunov exponents satisfy the relation 
\[
\chi_{f_{\omega}^{\epsilon(f)}} 
= \lim_{\omega} \epsilon(f_n)\,\chi_{f_n}.
\]
\end{thm}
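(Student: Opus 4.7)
The plan is to leverage the potential-theoretic formula
\[
\chi_g = \int_{\P^{1,\mathrm{an}}_k}\log|dg|\,d\mu_g,
\]
which by \S\ref{sec: Julia} is valid both over $\mathbb{C}$ and over any non-Archimedean complete valued field, and then to pass to the $\omega$-limit simultaneously in the integrand and the measure. The rescaling by $\epsilon(f_n)$ is precisely chosen so that complex moduli raised to the power $\epsilon(f_n)$ convert to non-Archimedean absolute values on $\sH^{\epsilon(f)}(\omega)$.

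First I would fix normalized homogeneous lifts $F_n=(P_n,Q_n)$ with $\max\{|a_{i,n}|,|b_{j,n}|\}=1$ and write
\[
|df_n|(z)=\frac{|P_n'Q_n-P_nQ_n'|(z)\max(1,|z|^2)}{\max(|P_n|,|Q_n|)^{2}(z)}.
\]
By the defining property of the Robinson field, $\lim_{\omega}|c_n|^{\epsilon(f_n)}=|c_\omega^{\epsilon(f)}|$ for every $c_\omega^{\epsilon(f)}\in \sH^{\epsilon(f)}(\omega)$ represented by $(c_n)_{n\in\N}$, so multiplying $\log|df_n|$ by $\epsilon(f_n)$ and taking the $\omega$-limit yields, pointwise along sequences $z_n\mapsto z_\omega^{\epsilon(f)}$, the non-Archimedean quantity $\log|df_\omega^{\epsilon(f)}|$. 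The second ingredient is the convergence of the equilibrium measures: one shows that the rescaled dynamical potentials $\epsilon(f_n)\cdot \frac{1}{d^l}\log\|F_n^l\|$ converge $\omega$-almost surely to the analogous potential of $f_\omega^{\epsilon(f)}$, so that the pushforwards of $\mu_{f_n}$ to a hybrid Berkovich compactification converge weakly to $\mu_{f_\omega^{\epsilon(f)}}$.

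The main obstacle is that $\log|df|$ has logarithmic singularities along the critical set of $f$, so weak-$\ast$ convergence of measures alone is not enough to commute the $\omega$-limit with the integral. To overcome this, I would combine convergence of the potentials (rather than just of the measures) with a uniform equidistribution estimate showing that $\mu_{f_n}$ assigns vanishing mass to shrinking neighbourhoods of the critical points of $F_n$, uniformly in $n$. The key input making this estimate uniform is the normalization $\lim_\omega|\mathrm{Res}(f_n)|^{\epsilon(f_n)}=e^{-1}>0$, which prevents $f_\omega^{\epsilon(f)}$ from degenerating and supplies the uniform H\"older regularity of the dynamical potentials needed to control the singularities. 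Assembling these two ingredients produces $\epsilon(f_n)\chi_{f_n}\to \chi_{f_\omega^{\epsilon(f)}}$ along $\omega$.
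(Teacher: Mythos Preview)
The paper does not actually prove Theorem~\ref{thm: asymtlyapunov}; it is quoted as \cite[Corollary~6.9]{FG25}, so there is no in-paper argument to compare against. Your outline is consistent with the strategy one expects in that reference: reinterpret $\epsilon(f_n)\chi_{f_n}$ as the Lyapunov exponent of $f_n$ computed over the rescaled Archimedean field $\C_{\epsilon(f_n)}$, build a hybrid Berkovich-type space carrying both the complex fibres and the non-Archimedean fibre $\P^{1,\mathrm{an}}_{\sH^{\epsilon(f)}(\omega)}$, prove continuity of the equilibrium potentials along $\omega$, and deduce convergence of the integrals of $\log|df|$ by handling the logarithmic poles via uniform regularity coming from the normalization $|\Res(f_\omega^{\epsilon(f)})|=e^{-1}>0$.

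Two places where your sketch would need real work before it becomes a proof. First, the pointwise convergence you describe, ``along sequences $z_n\mapsto z_\omega^{\epsilon(f)}$'', only sees Type-1 points of the Berkovich line; since $\mu_{f_\omega^{\epsilon(f)}}$ may be supported entirely on non-rigid points (for instance a single Type-2 point in the potential-good-reduction case, or a segment in the Bernoulli case), you must argue with convergence of \emph{potentials} on the full hybrid space, not with pointwise values of $\log|df|$ on rigid points. Second, the ``hybrid Berkovich compactification'' in the ultrafilter setting is not the same object as the meromorphic-family hybrid space; constructing it and proving weak convergence of $\mu_{f_n}$ towards $\mu_{f_\omega^{\epsilon(f)}}$ inside it is precisely the technical core of \cite{FG25}, and your sketch presupposes it rather than supplying it. With those two ingredients granted, the rest of your plan (uniform H\"older bounds on dynamical Green functions to control the critical-set singularities) is the correct way to finish.
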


\smallskip

The following proposition enables us to recover the asymptotic of multipliers from the multiplier for the limit map. (cf. \cite{DMM08,MCM09,luo2021trees})

\smallskip

For each $n\in\N$, denote by $\Per_{l}(f_n)\subseteq \P^{1}(\C)$  the set of periodic points whose period divides $l$, repeated with multiplicities. We define an equivalence relation on $\prod_{n\in\N}\Per_{l}(f_n)$: two sequences are equivalent if they are equal on an $\omega$ set. 

The set of equivalent classes is denoted by $\mathrm{P}_l.$
\begin{prop}
\label{prop: contmultiplier}
The canonical map 
$
\mathrm{P}_l \longrightarrow \Per_{l}(f_{\omega}^{\epsilon(f)})$ sending $(p_n)_{n\in\N}$ to $p_{\omega}^{\epsilon(f)}$
is bijective.

Furthermore, we have the convergence of multiplier \[\lim_{\omega}|(f_n^{l})'(p_n)|^{\epsilon(f_n)}=|(f_{\omega}^{l})'(p)|.\]
\end{prop}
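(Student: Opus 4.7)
The plan is to split the proof into two parts: bijectivity of the canonical map and the multiplier convergence identity. Both parts will rely on the principle that algebraic operations in $\sH^{\epsilon(f)}(\omega)$ commute with the $\omega$-limit, together with Proposition~\ref{prop: grouche} used to lift algebraic identities from the limit to the sequence.

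For the bijectivity, I first observe that the coefficients of the iterate $f_n^l = [P_n^{(l)}:Q_n^{(l)}]$ are universal polynomial expressions in the coefficients of $f_n = [P_n:Q_n]$. Hence, after appropriate normalization, $(f_n^l)_\omega^{\epsilon(f)}$ agrees projectively with $(f_\omega^{\epsilon(f)})^l$, and the identity $f_n^l(p_n)=p_n$ passes to the $\omega$-limit, yielding $p_\omega^{\epsilon(f)} \in \Per_l(f_\omega^{\epsilon(f)})$; so the map is well-defined. To establish surjectivity, I would consider the homogeneous polynomial $R_n(z_0,z_1) := z_1 P_n^{(l)}(z_0,z_1) - z_0 Q_n^{(l)}(z_0,z_1)$, whose zeros (with multiplicity) in $\P^1(\C)$ enumerate $\Per_l(f_n)$. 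Since $f_\omega^{\epsilon(f)}$ still has degree $d$ (because $|\Res(f_\omega^{\epsilon(f)})| = e^{-1}>0$), the limit polynomial $R_\omega$ remains of degree $d^l+1$, and its roots enumerate $\Per_l(f_\omega^{\epsilon(f)})$ with multiplicity. Proposition~\ref{prop: grouche} then lifts each root $p$ of multiplicity $m$ in $R_\omega$ to $m$ distinct sequences of roots of $R_n$, producing $m$ elements of $\mathrm{P}_l$ mapping to $p$. Since both $\mathrm{P}_l$ and $\Per_l(f_\omega^{\epsilon(f)})$ have total cardinality $d^l+1$, this surjection is a bijection.

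For the multiplier convergence, I would first reduce to the case where the entire orbit $\{(f_\omega^{\epsilon(f)})^j(p)\}_{j=0}^{l-1}$ in $\P^1(\sH^{\epsilon(f)}(\omega))$ avoids the point $\infty$. This is achieved by conjugating $f_n$ by a Mobius transformation $M \in \PGL_2(\C)$ chosen so that $M^{-1}(\infty)$ lies outside this finite orbit; conjugation preserves both the bijection on periodic points and the multiplier. In this reduced setting, the chain-rule formula $(f^l)'(p) = \prod_{j=0}^{l-1} f'(f^j(p))$ expresses the multiplier as a rational function in $p$ and the coefficients of $f$ whose denominator, a product of the nonvanishing quantities $Q(f^j(p),1)^2$, is invertible at $p_\omega^{\epsilon(f)}$ in $\sH^{\epsilon(f)}(\omega)$. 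The compatibility of rational operations with the $\omega$-limit then yields $(f_\omega^l)'(p_\omega^{\epsilon(f)}) = \lim_\omega (f_n^l)'(p_n)$ as an identity in $\sH^{\epsilon(f)}(\omega)$; taking absolute values produces the desired $|(f_\omega^l)'(p)| = \lim_\omega |(f_n^l)'(p_n)|^{\epsilon(f_n)}$.

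The main obstacle I anticipate is the careful bookkeeping of normalizations: the maximal modulus of the coefficients of $f_n^l$, and that of $R_n$, may behave quite differently from those of $f_n$, so one must verify that after dividing by the correct scalar, the limit polynomial $R_\omega$ is neither identically zero nor of lower degree than $d^l+1$. This is ultimately controlled by the condition $|\Res(f_\omega^{\epsilon(f)})|=e^{-1}>0$, which guarantees that $f_\omega^{\epsilon(f)}$ has exact degree $d$ and thereby excludes the only possible mode of degeneracy.
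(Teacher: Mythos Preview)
Your proposal is correct and follows essentially the same strategy as the paper: both encode $\Per_l$ as the zero locus of the homogeneous polynomial $z_1 P^{(l)} - z_0 Q^{(l)}$, invoke Proposition~\ref{prop: grouche} to lift roots from $\sH^{\epsilon(f)}(\omega)$ to sequences, and conclude bijectivity by matching cardinalities, with the multiplier identity following because the derivative is a rational expression in the coefficients and the periodic point that commutes with the $\omega$-limit. The only cosmetic differences are that the paper reduces to $l=1$ at the outset and spells out the pigeonhole argument for $|\mathrm{P}_l|\le d^l+1$ (which you take as given from the introduction), while you work directly with the iterate and are more explicit about conjugating away from $\infty$ before applying the chain rule; neither change affects the substance of the argument.
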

\begin{proof}
We first prove the surjectivity.
Without loss of generality, we suppose $l=1.$ We also suppose the fixed points of $f_n$ and $f_{\omega}^{\epsilon(f)}$ are all contained in the chart $[z\colon 1].$ Let $f_n=[P_n\colon Q_n]$ be a normalized representation so that the maximal absolute value of coefficients of $P$ and $Q$ is equal to $1$. Then the zeros of $P_n-zQ_n$ correspond to the fixed points of $f_n.$ 

  Suppose there exists $(p_{n})_{n\in\N}$ such that $P_n(p_n)-p_nQ(p_n)=0.$ Then $P_{\omega}^{\epsilon(f)}(p_{\omega}^{\epsilon(f)})-p_{\omega}^{\epsilon(f)}Q_{\omega}^{\epsilon(f)}(p_{\omega}^{\epsilon(f)})=0.$ Hence, $p_{\omega}^{\epsilon(f)}$ is a fixed point of $f_{\omega}^{\epsilon(f)}.$

  Conversely, let $p$ be a fixed point of $f_{\omega}^{\epsilon(f)}$, i.e., $P_{\omega}^{\epsilon(f)}(p)-pQ_{\omega}^{\epsilon(f)}(p)=0.$  By Proposition~\ref{prop: grouche}, there exists a sequence $(p_{n})_{n\in\N}\in\C^{\N}$  such that 
  \begin{enumerate}
      \item $p_{\omega}^{\epsilon(f)}=p$;
      \item $P_n(p_n)-p_nQ_n(p_n)=0$. 
  \end{enumerate}
  Note that $(f_n'(p_n))_{n\in\N}$ defines the equivalent class $(f_{\omega}^{\epsilon(f)})'(p_{\omega}^{\epsilon(f)})$. By the definition of the norm on the complex Robinson field $\sH^{\epsilon(f)}(\omega)$, we obtain that $\lim_\omega|f_n'(p_n)|^{\epsilon(f_n)}=|(f_{\omega}^{\epsilon(f)})'(p_{\omega}^{\epsilon(f)})|.$

  To prove bijectivity, it suffices to show that 
\(|\mathrm{P}_l| \le d^l + 1\),
which equals the cardinality of 
\(\Per_l(f_{\omega}^{\epsilon(f)})\).

Fix an ordering 
\(\Per_{l}(f_n) = \{p_{1,n}, \dots, p_{d^l+1,n}\}\).
Then any element 
\((p_n)_{n\in\N} \in \prod_{n\in\N} \Per_{l}(f_n)\)
can be written as 
\(p_n = p_{\phi(n),n}\)
for some function 
\(\phi \colon \N \to \{1, \dots, d^l+1\}\).
Define 
\(E_i = \{n \in \N \mid \phi(n) = i\}\);
the sets \(E_i\) form a finite covering of \(\N\),
so there exists \(i\) with \(E_i \in \omega\).
Hence 
\((p_n)_{n\in\N}\)
is equivalent to 
\((p_{i,n})_{n\in\N}\),
showing that each equivalence class is represented by one of these sequences.
\end{proof}

Denote by $[f_n]$ the conjugacy class of $f_n$ in the moduli space. Recall that  the resultant of the conjugacy class is defined as \[|\res([f_n])|\coloneqq \max_{M\in\PGL_{2}(\C)}|\Res(M_n^{-1}\circ f_n\circ M_n)|.\]
\begin{thm}
\label{thm: limitbadreduction}
If \( |\Res(f_n)| = |\res([f_n])| \) for all \(n\), 
then the rational map \( f_{\omega}^{\epsilon(f)} \) does not admit potential good reduction.
\end{thm}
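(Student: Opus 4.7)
I plan to argue by contradiction, exploiting the extremality of $|\Res(f_n)|$ within its conjugacy class. Suppose $f_\omega^{\epsilon(f)}$ admits potential good reduction: then there exists $M\in\PGL_2(\sH^{\epsilon(f)}(\omega))$ such that $|\Res(M^{-1}\circ f_\omega^{\epsilon(f)}\circ M)| = 1$. My first step would be to lift $M$ to a sequence of complex Möbius transformations $(M_n)_{n\in\N}\in\PGL_2(\C)^\N$. Writing $M$ as a $2\times 2$ matrix with entries in $\sH^{\epsilon(f)}(\omega)$, each entry is by construction represented by a complex sequence, and the non-vanishing of $\det M$ in $\sH^{\epsilon(f)}(\omega)$ forces $\det M_n\ne 0$ $\omega$-almost surely, so that $M_n$ defines a genuine Möbius transformation for $\omega$-almost all $n$.

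The second and principal step would be to verify that the non-Archimedean limit construction of \S\ref{sec: constructionNA} is functorial with respect to Möbius conjugation: setting $g_n := M_n^{-1}\circ f_n\circ M_n \in \Rat_d(\C)$, one should have
\[
g_\omega^{\epsilon(f)} \;=\; M^{-1}\circ f_\omega^{\epsilon(f)}\circ M
\]
as rational maps of degree $d$ over $\sH^{\epsilon(f)}(\omega)$, and consequently
\[
|\Res(M^{-1}\circ f_\omega^{\epsilon(f)}\circ M)| \;=\; \lim_{\omega} |\Res(g_n)|^{\epsilon(f_n)}.
\]
This is the delicate point of the argument: coefficient-wise $\omega$-limits obviously commute with polynomial operations such as composition and the formation of the resultant, but one must check that the normalization built into the definition of $|\Res|$ (division by $\max\{|a_i|^{2d},|b_j|^{2d}\}$) survives the passage to the Robinson limit. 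Since $|\Res|$ is scaling-invariant in $(P,Q)$, this should reduce to verifying that polynomial identities in the coefficients pass to the $\omega$-limit, a property built into the construction.

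Once this identification is in place, the contradiction reduces to a one-line computation. The hypothesis $|\Res(f_n)| = |\res([f_n])|$ gives $|\Res(g_n)|\le |\Res(f_n)| = C_d\, e^{-1/\epsilon(f_n)}$ for every $n$, and therefore
\[
|\Res(M^{-1}\circ f_\omega^{\epsilon(f)}\circ M)| \;\le\; \lim_{\omega} C_d^{\epsilon(f_n)} e^{-1} \;=\; e^{-1} < 1,
\]
using that the sequence degenerates so $\epsilon(f_n)\to 0$ $\omega$-a.s. This contradicts the standing assumption $|\Res(M^{-1}\circ f_\omega^{\epsilon(f)}\circ M)|=1$. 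The main obstacle is therefore concentrated entirely in the functoriality claim of the second paragraph; once that is granted, the theorem is immediate.
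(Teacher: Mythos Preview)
Your strategy is correct and complete in the substantive case. The paper does not actually prove this statement: its proof consists of a single citation to \cite[Theorem~1.1]{FG25}. Your contradiction argument---lift the conjugating transformation $M$ to a sequence $(M_n)$ in $\PGL_2(\C)$, use that the Robinson-field construction commutes with polynomial and rational operations on coefficients to identify $g_\omega^{\epsilon(f)}$ with $M^{-1}f_\omega^{\epsilon(f)}M$, and then bound the limit resultant by $e^{-1}$ via the extremality hypothesis---is the natural direct route and is almost certainly the content of the cited reference. The equivalence you invoke at the start (potential good reduction $\Leftrightarrow$ some $\PGL_2$-conjugate has normalized resultant~$1$) is standard: it follows from the transitivity of $\PGL_2(k)$ on Type-2 points together with Proposition~\ref{prop: basicreduction}(1)--(2). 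Your identification of the functoriality step as the crux is accurate, and your sketch of why it holds (the homogeneous resultant is polynomial in the coefficients, the normalizing factor $\max|c_i|^{2d}$ is a finite maximum, and both operations commute with ultralimits) is right.

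One small imprecision: you invoke ``the sequence degenerates so $\epsilon(f_n)\to 0$'' at the end, but degeneration is not among the hypotheses of the theorem. This is harmless: under the hypothesis $|\Res(f_n)|=|\res([f_n])|$, if the sequence does \emph{not} degenerate in moduli then $|\Res(f_n)|$ is bounded away from $0$ on an $\omega$-big set, hence $\lim_\omega\epsilon(f_n)>0$, the field $\sH^{\epsilon(f)}(\omega)$ is Archimedean by Proposition~\ref{prop: complexrobinsonfield}, and the conclusion is vacuous (there are no Type-2 points). You should add one sentence to dispose of this case before assuming $\lim_\omega\epsilon(f_n)=0$.
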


\begin{proof}
See~\cite[Theorem 1.1]{FG25}.
\end{proof}
\subsection{Dichotomy in the asymptotic behavior of multipliers}
\label{sec: multiplier}
In this section, we prove Theorem~\ref{thm: multiplierdicho}.

\begin{proof}[Proof of Theorem~\ref{thm: multiplierdicho}]
Replacing $f_n$ with its conjugate, we may assume that $|\Res(f_n)| = |\res([f_n])| $. Since $(f_n)_{n\in\N}$ degenerates in the moduli space, we have $\lim_{\omega}\epsilon(f_n)=0.$  By Proposition~\ref{prop: complexrobinsonfield} and Theorem~\ref{thm: limitbadreduction}, $\sH^{\epsilon(f)}(\omega)$ is non-Archimedean, and \( f_{\omega}^{\epsilon(f)} \) does not have potential good reduction.

Let $\chi_{f_{\omega}^{\epsilon(f)}}$ be the Lyapunov exponent of $f_{\omega}^{\epsilon(f)}$. By Theorem~\ref{thm: asymtlyapunov}, we have 
\[\lim_{\omega}\epsilon(f_n)\chi_{f_n}=\chi_{f_\omega^{\epsilon(f)}}\]

By Theorem~\ref{thm: dichotomylyapunov}, if $\chi_{f_{\omega}^{\epsilon(f)}}>0$, then for any $\varepsilon>0$,  there exists $C>1$ such that for $l\in\N$ large enough, one has
 \[\#\left\{p \in \Per_{l}(f_{\omega}^{\epsilon(f)}) \middle| \lambda(f_{\omega}^{\epsilon(f)},p) >\frac{\log C}{l}+\frac{\chi_{f^{\epsilon(f)}_\omega}}{2} \right\} \geq (1 - \varepsilon) d^l.\]  
 Let $0<\alpha<\frac{1}{2}\chi_{f_{\omega}^{\epsilon(f)}}$. By Proposition~\ref{prop: contmultiplier}, for any Type-1 periodic point $p\in\Per_{l}(f_{\omega}^{\epsilon(f)})$ with $\lambda(f,p)>\frac{\log C}{l}+\frac{\chi_{f_{\omega}^{\epsilon(f)}}}{2}$, there exist $(p_n)_{n\in\N}\in\mathrm{P}_l$  such that $p=p_{\omega}^{\epsilon(f)}$, and that for $l$ large enough, we have 
\[\lim_{\omega}\epsilon(f_n)\lambda(f_n,p_n)=\lambda(f_{\omega}^{\epsilon(f)},p)> \frac{\log C}{l}+\frac{\chi_{f_\omega^{\epsilon(f)}}}{2}>\alpha.\]
Since $\lim_{\omega}\epsilon(f_n)=0$, we obtain $\lim_{\omega}\frac{-\log|\res([f_n])|}{\epsilon(f_n)}=1.$ Hence, one has \[\lim_{\omega}\frac{\lambda(f_n,p_n)}{-\log|\res([f_n])|}>\alpha.\]

Therefore, on an $\omega$-big set, $\lambda(f_n,p_n)>-\alpha\log|\res([f_n])|$. Thus, 
$$\#\Bigl\{ (p_n)\in \mathrm{P}_l : \lambda(f_n,p_n) > -\alpha\log|\res([f_n])|\ \text{ $\omega$-a.s.}\Bigr\}\geq  (1-\varepsilon)d^l.$$
We conclude that 
$$\lim_{l\to\infty}\frac{1}{d^l}\#\Bigl\{ (p_n)\in \mathrm{P}_l : \lambda(f_n,p_n) > -\alpha\log|\res([f_n])|\ \text{ $\omega$-a.s.}\Bigr\}=1.$$

Suppose $\chi_{f_\omega^{\epsilon(f)}}=0$. It follows from Theorem~\ref{thm: dichotomylyapunov} that \( f_{\omega}^{\epsilon(f)} \) is a Bernoulli map.  
Let \( I = [a,b] \) denote the convex hull of the Julia set 
\(\cJ(f_{\omega}^{\epsilon(f)})\), 
and let \( d_1, \dots, d_k \) be the integer slopes introduced in 
\S\ref{sec: Bernoullimap}.

For each \( l \in \mathbb{N} \), we claim that  \( f_{\omega}^{\epsilon(f)} \) admits at least
\[
d^l - d_1^l - d_k^l + 1
\]
Type-1 periodic points of period dividing \( l \), whose projections to \( I \) belong to the open interval \( (a,b) \).

Recall from \S\ref{sec: Bernoullimap} that 
\( f_{\omega}^{\epsilon(f)} \) maps the set of endpoints \( \{a,b\} \) to itself.  
We prove the claim under the assumption that 
\( f_{\omega}^{\epsilon(f)}(a) = a \) and \( f_{\omega}^{\epsilon(f)}(b) = b \); 
the remaining cases are analogous.

Since 
\(\deg_a \big((f_{\omega}^{\epsilon(f)})^l\big) = d_1^l\) 
and 
\(\deg_b \big((f_{\omega}^{\epsilon(f)})^l\big) = d_k^l\), 
Theorem~\ref{thm: positionfixedpoints} implies that there are 
\(d_1^l\) (resp. \(d_k^l\)) Type-1 fixed points of 
\( (f_{\omega}^{\epsilon(f)})^l \) 
whose projections equal \( a \) (resp. \( b \)).  
As the total number of Type-1 fixed points of 
\( (f_{\omega}^{\epsilon(f)})^l \) is \( d^l + 1 \), 
we obtain the desired estimate.

Furthermore, by Theorem~\ref{thm: positionfixedpoints}, 
if the projection of $p\in\Per_{l}(f_{\omega}^{\epsilon(f)})$ onto \( I \), denoted by \( x \), lies in the open interval \( (a,b) \), 
then there exists a root of unity 
\( \zeta\in \C \) 
such that
\[
\widetilde{((f_{\omega}^{\epsilon(f)})^l)'(p)} = \delta\,\tilde{(\zeta)},
\]
where \( \delta \) denotes the local degree of $(f_{\omega}^{\epsilon(f)})^l$ at \( x \).

By Proposition~\ref{prop: contmultiplier}, we can lift \( p \) to 
$(p_n)_{n\in\N} \in \mathrm{P}_l.$
By Lemma~\ref{lem: redlimit}, we have
\[
\bigl((f_{n}^l)'(p_n)\bigr)_{\omega}^{\C} = \delta\zeta.
\]

Since $\log \delta \le l \log \max\{d_1,\dots,d_k\},$
it follows that for any constant  satisfying 
\( C > \log \max\{d_1,\dots,d_k\} \), we have
\[
\lim_{\omega} \lambda(f_n, p_n) 
= \frac{1}{l} \log \delta 
< C.
\]
Hence,  we obtain
\[
\#\left\{ (p_n)_{n \in \N} \in \mathrm{P}_l 
\,\middle|\, 
\lambda(f_n, p_n) < C \text{ $\omega$-a.s.} 
\right\}
\ge d^l - d_1^l - d_k^l + 1.
\]

Therefore, for any integer
$\max\{d_1, d_k\} < m < d,$ we conclude 
 \[\#\left\{ (p_n)_{n \in \N} \in \mathrm{P}_l \,\middle|\, \lambda(f_n, p_n) < C \text{ $\omega$-a.s.} \right\} 
\geq d^l - m^l\] for $l$ large enough.
\end{proof}

\begin{rmk}
\label{rmk: optimalconstant}
 Suppose $f_{\omega}^{\epsilon(f)}$ is Bernoulli. Let $d_1,\cdots,d_k$ be integers as introduced in \S\ref{sec: Bernoullimap}. Then one can choose $C$ to be any constant which is greater than $\log\max_{1\leq i\leq d}\{d_i\}$. This choice is optimal, since \[\sup_{l}\frac{1}{l}\log\left\{\deg_{x}(f_{\omega}^{\epsilon(f)})^l,\text{ $x\in I$ and $(f_{\omega}^{\epsilon(f)}$})^l(x)=x\right\}=\log\max\{d_1,\cdots,d_k\}.\]
\end{rmk}

\subsection{Finiteness of multiplier scales}
 \label{sec: multiplierscale}
 In this section, we prove Theorem~\ref{thm: intro2d-2}. 

  Recall  a multiplier scale of a sequence of periodic points $(p_n)_{n\in\N}\in \mathrm{P}_l$ is defined by \[\eta_{p,n} = \frac{1}{1+\lambda(f_n, p_n)}.
\]

 Note that the scale $\eta_p$ is trivial if and only if $\lim_{\omega} \lambda(f_n, p_n)<\infty$. 

 \begin{lemma}
     \label{lem: scalecompare}
     Let $\eta_p$ be a multiplier scale defined by $(p_n)_{n\in\N}\in\mathrm{P}_l$, then we have $\eta_p\geq \epsilon(f).$
 \end{lemma}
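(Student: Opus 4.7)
The plan is to unwind the definition of the ordering on scales and then reduce the statement directly to the convergence of multipliers established in Proposition~\ref{prop: contmultiplier}.

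By the definition of the ordering on scales recalled in \S\ref{sec: scales}, the inequality $\eta_p \geq \epsilon(f)$ is equivalent to
\[
\lim_{\omega} \frac{\epsilon(f_n)}{\eta_{p,n}} < \infty,
\]
i.e., $\lim_{\omega} \epsilon(f_n)\bigl(1+\lambda(f_n,p_n)\bigr) < \infty$. Since $\epsilon(f_n)\in(0,1)$ by construction, it suffices to prove the bound
\[
\lim_{\omega} \epsilon(f_n)\,\lambda(f_n,p_n) < \infty.
\]

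To obtain this, I would invoke Proposition~\ref{prop: contmultiplier}, which identifies $(p_n)_{n\in\N}$ with a point $p=p_{\omega}^{\epsilon(f)}\in\Per_{l}(f_{\omega}^{\epsilon(f)})$ and asserts
\[
\lim_{\omega}\bigl|(f_n^{l})'(p_n)\bigr|^{\epsilon(f_n)}
= \bigl|(f_{\omega}^{l})'(p)\bigr|.
\]
The right-hand side is a finite non-negative real number, because $(f_{\omega}^{l})'(p)$ is an element of the field $\sH^{\epsilon(f)}(\omega)$ (interpreted in the standard chart around $p$). Taking logarithms yields
\[
\lim_{\omega}\epsilon(f_n)\,\log\bigl|(f_n^{l})'(p_n)\bigr| < \infty,
\]
and consequently
\[
\lim_{\omega}\epsilon(f_n)\,\log^{+}\bigl|(f_n^{l})'(p_n)\bigr| < \infty,
\]
which is precisely $\lim_{\omega}l\,\epsilon(f_n)\,\lambda(f_n,p_n) < \infty$. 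Dividing by the fixed integer $l$ gives the desired bound, and hence $\eta_p \geq \epsilon(f)$.

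There is no real obstacle here: the lemma is essentially a bookkeeping consequence of the fact that the scale $\epsilon(f)$ was chosen precisely so that the coefficients, and hence all derivatives at periodic points, are of bounded $\epsilon(f_n)$-th power along $\omega$.
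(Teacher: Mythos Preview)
Your proof is correct and follows essentially the same approach as the paper: both arguments reduce the inequality $\eta_p\geq\epsilon(f)$ to the finiteness of $\lim_{\omega}\epsilon(f_n)\lambda(f_n,p_n)$, which is a direct consequence of the multiplier convergence in Proposition~\ref{prop: contmultiplier}. The paper states this last step as the single identity $\lim_{\omega}\epsilon(f_n)\lambda(f_n,p_n)=\lambda(f_\omega^{\epsilon(f)},p_\omega^{\epsilon(f)})\in\R_+$, whereas you unpack the passage from $\log$ to $\log^+$ explicitly; the content is the same.
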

\begin{proof}
 By Proposition~\ref{prop: contmultiplier}, we have 
\[\lim_{\omega}\epsilon(f_n)\lambda(f_n,p_n)=\lambda(f_\omega^{\epsilon(f)},p_{\omega}^{\epsilon(f)})\in\R_+.\]
Hence $\lim_{\omega}\epsilon(f_n)(1+\lambda(f_n,p_n))<\infty.$ Thus, we obtain $\eta_p\geq \epsilon(f).$
\end{proof}

\begin{lemma}
\label{lem: transcendentaldegree}
    Up to conjugation by a Möbius transformation, the rational map \( f_\omega^{\epsilon(f)} \) can be written in the form
    $f_\omega^{\epsilon(f)}(z) = \left[ \sum_{i=0}^{d} a_i z^i : \sum_{i=0}^{d} b_i z^i \right],$   where the field $\C(a_i,b_j) \subseteq\sH^{\epsilon(f)}(\omega)$
has transcendence degree at most $2d-2$ over $\C.$
\end{lemma}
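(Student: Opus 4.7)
The plan is to apply Silverman's structure theorem on the moduli space and then normalize $f_\omega^{\epsilon(f)}$ by a Möbius conjugation using a dimension argument on fibers of the quotient map $\pi\colon \Rat_d \to \rat_d$.

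First, by Silverman's theorem, the moduli space $\rat_d$ is an affine $\C$-variety of dimension $2d-2$. The conjugacy class $[f_\omega^{\epsilon(f)}]$ corresponds to a $\sH^{\epsilon(f)}(\omega)$-point of $\rat_d$, equivalently a $\C$-algebra homomorphism $\phi\colon \cO(\rat_d)\to\sH^{\epsilon(f)}(\omega)$. The subfield $K_0\subseteq\sH^{\epsilon(f)}(\omega)$ generated by $\phi(\cO(\rat_d))$ has transcendence degree at most $\dim\rat_d=2d-2$ over $\C$.

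Next, I would produce a representative $\tilde f$ of $[f_\omega^{\epsilon(f)}]$ whose coefficients themselves lie in a field of transcendence degree $\leq 2d-2$. The direct route is to impose a $\PGL_2$-normalization: fix the closed subvariety $V\subset\Rat_d$ of maps with fixed points at $0$, $1$, and $\infty$ (imposing the three linear conditions $a_0=0$, $b_d=0$, and $\sum a_i=\sum b_i$); then $V$ is a $\C$-subvariety of dimension $2d+1-3=2d-2$ and the induced map $V\to\rat_d$ is finite. Whenever $f_\omega^{\epsilon(f)}$ has three distinct fixed points $p_1,p_2,p_3\in\P^1(\sH^{\epsilon(f)}(\omega))$, choosing $M\in\PGL_2(\sH^{\epsilon(f)}(\omega))$ with $M(p_i)\in\{0,1,\infty\}$ yields $\tilde f=M^{-1}\circ f_\omega^{\epsilon(f)}\circ M\in V(\sH^{\epsilon(f)}(\omega))$, so that the coordinates of $\tilde f$ generate a field of transcendence degree at most $\dim V=2d-2$.

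The main obstacle is the case in which $f_\omega^{\epsilon(f)}$ does not admit three distinct fixed points, so that the chosen $V$ may miss its orbit. I would resolve this scheme-theoretically. The fiber $\pi^{-1}([f_\omega^{\epsilon(f)}])$, viewed as a $K_0$-scheme of finite type, is non-empty because $f_\omega^{\epsilon(f)}$ itself provides a $\sH^{\epsilon(f)}(\omega)$-point of it. By the Nullstellensatz it therefore admits a point with values in $\overline{K_0}$, the algebraic closure of $K_0$ inside $\sH^{\epsilon(f)}(\omega)$, yielding a representative $\tilde f\in\Rat_d(\overline{K_0})$. Since $\PGL_2$-orbits in $\Rat_d$ are closed (which is built into Silverman's construction of $\rat_d$ as a geometric quotient), having the same image in $\rat_d$ forces $\tilde f$ and $f_\omega^{\epsilon(f)}$ to be $\PGL_2$-conjugate over $\sH^{\epsilon(f)}(\omega)$; the coefficients of $\tilde f$ then lie in $\overline{K_0}$, which still has transcendence degree $\leq 2d-2$ over $\C$.
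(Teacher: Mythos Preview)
Your argument is correct and reaches the same conclusion, but the route is genuinely different from the paper's. The paper proceeds by a short, explicit case analysis on the fixed-point configuration of $f_\omega^{\epsilon(f)}$: when there are three distinct fixed points it normalizes them to $0,1,\infty$ (your variety $V$); when a fixed point has multiplicity two it sends the double point to $0$ and the simple one to $\infty$, forcing $a_0=a_1=0$ and $b_d=0$; when all three coincide it sends the triple fixed point to $0$, forcing $a_0=a_1=a_2=0$. In each case the three resulting linear relations among the $2d+2$ homogeneous coefficients (together with the overall scaling) cut the transcendence degree down to $2d-2$ by hand.

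Your approach replaces this computation by a structural argument: push $f_\omega^{\epsilon(f)}$ to $\rat_d$, let $K_0$ be the residue field of the image point (of transcendence degree $\le 2d-2$), and use non-emptiness of the fiber of $\pi$ over $\spec K_0$ together with the Nullstellensatz to produce a $\overline{K_0}$-point $\tilde f$ in the same $\PGL_2$-orbit. The advantage of your method is conceptual cleanliness and uniformity (no cases, and it would transport to other moduli problems); it does, however, rely on the nontrivial input that $\rat_d$ is a \emph{geometric} quotient so that equal image in $\rat_d$ genuinely forces conjugacy over the algebraically closed field $\sH^{\epsilon(f)}(\omega)$. The paper's argument is more elementary and self-contained, and has the incidental benefit of exhibiting an explicit normal form in each case.
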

\begin{proof}
Since \(\deg f_{\omega}^{\epsilon(f)} \geq 2\), the map \(f_{\omega}^{\epsilon(f)}\) admits at least three fixed points (counted with multiplicities), which we denote by \(p_1, p_2, p_3\). Write
\[
f_\omega^{\epsilon(f)}(z) = \left[ \sum_{i=0}^{d} a_i z^i : \sum_{i=0}^{d} b_i z^i \right].
\]
After normalization, we may assume that at least one of the coefficients is equal to \(1\).

We consider the possible configurations of fixed points:

  If \(p_1, p_2, p_3\) are distinct, we may suppose that  they are \(0\), \(1\), and \(\infty\). Under this normalization, we have \(a_0 = 0\), \(b_d = 0\), and the fixed point condition imposes the constraint \(\sum_{i=0}^{d} a_i = \sum_{i=0}^{d} b_i\). Therefore, the field \(\C(a_i, b_j)\) has transcendence degree at most \(2d - 2\).

  If \(p_1 = p_2 \ne p_3\), we may assume \(p_1 = p_2 = 0\) and \(p_3 = \infty\). Then the multiplicity of the fixed point at \(0\) implies \(a_0 = a_1 = 0\), and we also have \(b_d = 0\). Again, this yields the transcendence degree of $\C(a_i,b_j)$ is at most $2d-2.$

  If \(p_1 = p_2 = p_3\), we may assume all fixed points coincide at \(0\). Then the multiplicity condition implies \(a_0 = a_1 = a_2 = 0\). In this case, the transcendence degree of \(\C(a_i, b_j)\) remains at most \(2d - 2\).
\end{proof}
\begin{proof}[Proof of Theorem~\ref{thm: intro2d-2}]
Consider the algebraic closure $K$ of the subfield $\Q(a_i,b_j)$ in $\sH^{\epsilon(f)}(\omega)$ where $a_i,b_j$ are the coefficients
of the homogenous polynomials defining $f_{\omega}^{\epsilon(f)}$ in homogeneous coordinates.

By Lemma~\ref{lem: transcendentaldegree}, we may suppose that $K$ has transcendence degree at most $2d-2$ over $\C$.

Recall that for each non-trivial scale $\eta\geq \epsilon(f)$, one can define valuation rings $R_{+}^{\eta}\cap K,R_-^{\eta}\cap K\subseteq K$ , where \[
R_{+}^{\eta} = \left\{(z_n) \in \mathbb{C}^{\mathbb{N}} \mid \lim_{\omega} |z_n|^{\eta_n} <\infty\right\} \Big/ \left\{(z_n) \mid \lim_{\omega} |z_n|^{\epsilon(f_n)} = 0\right\}.
\]
and 
\[
R_{-}^{\eta} = \left\{(z_n) \in \mathbb{C}^{\mathbb{N}} \mid \lim_{\omega} |z_n|^{\eta_n} \leq 1 \right\} \Big/ \left\{(z_n) \mid \lim_{\omega} |z_n|^{\epsilon(f_n)} = 0\right\}.
\]

Let $\eta_p$ be a non-trivial multiplier scale defined by  $(p_n)_{n\in\N} \in \mathrm{P}_l$. By Lemma~\ref{lem: scalecompare}, we have $\eta_p\geq \epsilon(f).$ Note that 
\[\lim_{\omega}|(f_n^l)'(p_n)|^{\eta_{p,n}}=e>1.\] Hence, 
the sequence of multipliers $((f_n^l)'(p_n))_{n\in\N}$ belongs to $R_+^{\eta_p}\cap K \setminus R^{\eta_p}_-\cap K.$

We conclude using Theorem~\ref{thm: numbervaluationring}. 
\end{proof}

\section{Examples}
\label{sec: example}
In this section, we discuss in detail  several examples listed in the introduction.
\begin{example}
\normalfont
\label{eg: fundamental}
In the case (1) of Theorem~\ref{thm: multiplierdicho}, if we choose $\alpha>\frac{1}{2}\chi_{f_\omega^{\epsilon(f)}}$, then the statement might be false.

Consider \( f_n(z) = z^2 + n \).  
A direct computation of \( |\Res(f_n)| \) shows that 
$\epsilon(f) = \left(\frac{1}{\log n}\right)_{n \in \mathbb{N}}.$

Let \( a \) be the element of the complex Robinson field 
\(\sH^{\epsilon(f)}(\omega)\) represented by the sequence 
\((n)_{n \in \mathbb{N}}\). 
Then $f_{\omega}^{\epsilon(f)}(z) = z^2 + a,$ where
$|a| = \lim_{\omega} |n|^{\frac{1}{\log n}}=e.$

By induction, one verifies that for every integer \(k \ge 1\),
\[
|(f_{\omega}^{\epsilon(f)})^k(z)| =
\begin{cases}
e^{2^{k-1}}, & \text{if } |z| < \sqrt{e},\\[4pt]
e^{2^{k}},   & \text{if } |z| > \sqrt{e}.
\end{cases}
\]
Hence all points with modulus different from \(\sqrt{e}\) escape to infinity, 
and the Type-1 periodic points of \(f_{\omega}^{\epsilon(f)}\) is the circle 
\(|z| = \sqrt{e}\).  Hence,
\begin{align*}
\bigl|((f_{\omega}^{\epsilon(f)})^l)'(p)\bigr|
&= \prod_{i=0}^{l-1}
   \bigl|(f_{\omega}^{\epsilon(f)})'\bigl((f_{\omega}^{\epsilon(f)})^i(p)\bigr)\bigr| \\
&= 2^l e^{l/2}.
\end{align*}

By the Misiurewicz–Przytycki formula, the Lyapunov exponent of 
\(f_{\omega}^{\epsilon(f)}\) is given by 
\[
\chi_{f_{\omega}^{\epsilon(f)}} 
= \lim_{k \to \infty} 
   \frac{1}{2^k}\log\bigl|(f_{\omega}^{\epsilon(f)})^k(0)\bigr|
= \frac{1}{2}.
\]
Consequently, by Proposition~\ref{prop: contmultiplier}, for any 
\(l \in \mathbb{N}\) and  any
\((p_n)_{n \in \mathbb{N}} \in \mathrm{P}_l\), we have
\[
\lim_{\omega} \epsilon(f_n)\lambda(f_n,p_n)
= \frac{1}{2}
= \chi_{f_{\omega}^{\epsilon(f)}}.
\]
\end{example}
\begin{example}
\label{eg: 1}
\normalfont
Unlike in the case of a meromorphic family. It may occur that most multipliers are bounded, yet there exist exponentially many periodic points whose multipliers explode.

Consider the sequence of rational maps
\[
f_n(z)=\frac{z^2+n}{1+e^{-n}z^4}.
\]
One can verify by a computation of $|\Res(f_n)|$ that 
\(\epsilon(f)=(\frac{1}{n})_{n\in\mathbb{N}}\).
Let \(a,b\in\mathscr{H}^{\epsilon(f)}(\omega)\) be the elements defined respectively by the sequences \((e^{-n})_{n\in\mathbb{N}}\) and \((n)_{n\in\mathbb{N}}\). Then
\[
f_{\omega}^{\epsilon(f)}(z)=\frac{z^2+b}{1+az^4},
\]
with 
\(|a|=\lim_{\omega}|e^{-n}|^{\frac{1}{n}}=e^{-1}\) 
and \(|b|=\lim_{\omega}|n|^{\frac{1}{n}}=1\).

We claim that \(f_{\omega}^{\epsilon(f)}\) has no potential good reduction and is in fact a Bernoulli map:

For \(t\in\bigl[0,\tfrac{1}{2}\bigr]\), we have
\[
f_{\omega}^{\epsilon(f)}\bigl(\zeta(0,e^{t})\bigr)
=\zeta(0,e^{S(t)}),
\]
where
\[
S(t)=
\begin{cases}
    2t,&0\leq t\leq \tfrac{1}{4},\\[0.5em]
    -2t+1,&\tfrac{1}{4}\leq t\leq \tfrac{1}{2}.
\end{cases}
\]
In particular, \(x_g\) is a \emph{repelling fixed point of local degree \(2\)}, and the segment
\[
I=[x_g,\zeta(0,e^{\frac{1}{2}})]
\]
is backward invariant. Therefore, $f_{\omega}^{\epsilon(f)}$ is Bernoulli.

Hence, we are in the second case of Theorem~\ref{thm: multiplierdicho}, which implies that most multipliers remain bounded.

By Theorem~\ref{thm: positionfixedpoints}, for each integer \(l\geq 1\), there exist \(2^l\) Type-1 repelling periodic points in \(\Per_{l}\bigl(f_{\omega}^{\epsilon(f)}\bigr)\) whose projection onto \(I\) is \(x_g\). Note that
\[
\widetilde{f_{\omega}^{\epsilon(f)}}(z)=z^2+\tilde{b}.
\]

For each such periodic point \(p\), on the one hand, we can lift \(p\) to a sequence \((p_n)_{n\in\mathbb{N}}\in\prod_{n=0}^{\infty}\Per_{l}(f_n)\). On the other hand, since \(\tilde{p}\) is a periodic point of 
\[
g(z)=z^2+\tilde{b},
\]
Lemma~\ref{lem: liftfixedpt} and Proposition~\ref{prop: grouche} guarantee the existence of \((q_n)_{n\in\mathbb{N}}\in\prod_{n=0}^{\infty}\Per_l(g_n)\) such that
\(\widetilde{q_{\omega}^{\epsilon(f)}}=\tilde{p}\), where $g_n(z)=z^2+n.$  
Since \(p\) determines a good direction for \((f_{\omega}^{\epsilon(f)})^l\) at \(x_g\), we obtain
\begin{equation}\label{eq:multiplier}
\widetilde{\bigl((f_{\omega}^{\epsilon(f)})^l\bigr)'(p)}
=\bigl((\widetilde{f}_{\omega}^{\epsilon(f)})^l\bigr)'(\tilde{p})
=(g^l)'(\widetilde{q}_{\omega}^{\epsilon(f)}).
\end{equation}
Note that  if two sequences \((x_n)_{n\in\mathbb{N}}\) and \((y_n)_{n\in\mathbb{N}}\) in \(\mathbb{C}^{\mathbb{N}}\) satisfy 
\(\widetilde{x}_{\omega}^{\epsilon(f)}=\widetilde{y}_{\omega}^{\epsilon(f)}\), then
\[
\lim_{\omega}|x_n|^{\frac{1}{\log n}}
=\lim_{\omega}|y_n|^{\frac{1}{\log n}},
\]
because \(\lim_{\omega}\epsilon(f_n)\log n=0\).  
Thus, from \eqref{eq:multiplier} we deduce
\[
\lim_{\omega}\bigl|(f_{n}^l)'(p_n)\bigr|^{\frac{1}{\log n}}
=\lim_{\omega}\bigl|(g_n^l)'(q_n)\bigr|^{\frac{1}{\log n}}.
\]

By Example~\ref{eg: fundamental}, we have 
\[
\lim_{\omega}\bigl|(g_{n}^l)'(q_n)\bigr|^{\frac{1}{\log n}}=e^{\frac{l}{2}}
\]
Consequently, for $0<\alpha<\frac{1}{2}$, we have
\[
 \frac{1}{2^l} 
\#\Bigl\{ (p_n) \in \mathrm{P}_l : 
\lambda(f_n, p_n) > \alpha \log n 
\quad \omega\text{-a.s.} \Bigr\} 
\ge 1.
\]
\end{example}
\begin{example}
\label{eg: 2}
\normalfont
When \( d = 2 \), one may observe that up to three distinct multiplier scales can occur. 
Hence, the bound stated in Theorem~\ref{thm: intro2d-2} is optimal. 

To illustrate this, consider the sequence of rational maps
\[
f_n(z) = \frac{z(1-z)}{e^{-n}z^2 - n z + n}.
\]
It is straightforward to verify that the fixed points of \( f_n \) are given by
\begin{align*}
    p_{1,n} &= 0,\\
    p_{2,n} &= \frac{n - 1 - \sqrt{(n - 1)^2 - 4 e^{-n}(n - 1)}}{2 e^{-n}},\\
    p_{3,n} &= \frac{n - 1 + \sqrt{(n - 1)^2 - 4 e^{-n}(n - 1)}}{2 e^{-n}}.
\end{align*}
Next, we compute the multipliers at these fixed points. 
A direct differentiation yields
\begin{align*}
  f_n'(p_{1,n}) &= \frac{1}{n},\\[4pt]
  f_n'(p_{2,n}) &= n 
  - \frac{4 e^{-n}}
  {\left(\sqrt{1 - \dfrac{4 e^{-n}}{n - 1}} - 1\right)^{2}},\\
  f_n'(p_{3,n}) &= n 
  - \frac{4 e^{-n}}
  {\left(\sqrt{1 - \dfrac{4 e^{-n}}{n - 1}} + 1\right)^{2}}.
\end{align*}
Therefore, the multiplier scales associated with the sequences 
\((p_{1,n})_{n \in \mathbb{N}}\), \((p_{2,n})_{n \in \mathbb{N}}\), \((p_{3,n})_{n \in \mathbb{N}}\)
are respectively \((1)\), \((\frac{1}{n})_{n\in\N}\), and \((\frac{1}{\log n})_{n\in\N}\).
\end{example}
\bibliographystyle{alpha}
\bibliography{ref}
\end{document}